\title{Quasi-circles through prescribed points}
\author{John M. Mackay}
\address{Mathematical Institute \\
 University of Oxford \\ Oxford, UK.}
\email{john.mackay@maths.ox.ac.uk}
\date{March 12, 2013}%\today}%{18/5--30/5, 1/7--\today}
\subjclass[2010]{Primary 30L10; Secondary 30C65, 51F99}
\keywords{Quasi-circle, quasi-arc, linearly connected, bounded turning, $n$-Bogensatz.}
\thanks{JMM was supported by EPSRC grant ``Geometric and analytic aspects of infinite groups''.}
\numberwithin{equation}{section}
\newtheorem{theorem}[equation]{Theorem}
\newtheorem{proposition}[equation]{Proposition}
\newtheorem{corollary}[equation]{Corollary}
\newtheorem{lemma}[equation]{Lemma}
\newtheorem{definition}[equation]{Definition}
\newtheoremstyle{citing}% name
  {3pt}%      Space above, empty = `usual value'
  {3pt}%      Space below
  {\itshape}% Body font
  {}%         Indent amount (empty = no indent, \parindent = para indent)
  {\bfseries}% Thm head font
  {}%        Punctuation after thm head
  {.5em}%     Space after thm head: " " = normal interword space;
\theoremstyle{citing}
\newtheorem*{varthm}{}% all text supplied in the note
\DeclareMathOperator{\diam}{diam}
\newcommand{\alp}{\alpha}
\newcommand{\del}{\delta}
\newcommand{\gam}{\gamma}
\newcommand{\eps}{\epsilon}
\newcommand{\lam}{\lambda}
\newcommand{\bdry}{\partial_\infty}
\newcommand{\cC}{\mathcal{C}}
\newcommand{\cN}{\mathcal{N}}
\newcommand{\ra}{\rightarrow}
\newcommand{\R}{\mathbb{R}}
\newcommand{\Sph}{\mathbb{S}}
\newcommand{\N}{\mathbb{N}}
\newcommand{\Z}{\mathbb{Z}}
\newcommand{\HH}{\mathbb{H}}
\def\XXint#1#2#3{{\setbox0=\hbox{$#1{#2#3}{\int}$}
\vcenter{\hbox{$#2#3$}}\kern-.5\wd0}}
\numberwithin{equation}{section}
\begin{document}

\begin{abstract}
	We show that in an $L$-annularly linearly connected, $N$-doubling, complete metric space,
	any $n$ points lie on a $\lambda$-quasi-circle, where $\lambda$ depends only on $L, N$ and $n$.
	This implies, for example, that if $G$ is a hyperbolic group that does not
	split over any virtually cyclic subgroup, then any geodesic line in $G$ lies in
	a quasi-isometrically embedded copy of $\HH^2$.	
\end{abstract}

\maketitle

\vspace{-5mm}
\begin{center}
		\emph{Version accepted by Indiana University Mathematics Journal
		\footnote{\url{http://www.iumj.indiana.edu/IUMJ/Preprints/5211.pdf}}}
\end{center}

%%%%%%%%%%%%%%%%%%%%%%%%%%%%%%%%%%%%%%%%%%%%%%%%%%%%%%%%%%%%%%%%%
\section{Introduction}\label{sec-intro}

Menger's theorem for graphs extends to the following topological result,
known as the ``$n$-Bogensatz'' or $n$-arc connectedness theorem.
\begin{theorem}[\cite{Nob-32-arcs-between-points,Zip-33-indep-arcs,Why-48-bogensatz}]\label{thm-bogensatz}
	If $X$ is a connected, locally connected, locally compact metric space 
	that cannot be disconnected by removing any $n-1$ points, 
	then any two points in $X$ can be joined by $n$ arcs, pairwise disjoint apart from their endpoints.
\end{theorem}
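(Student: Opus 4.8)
The plan is to reduce the statement to Menger's theorem for finite graphs by approximating $X$ near the two chosen points by a finite $1$-complex, and then to recover the topological conclusion by a compactness argument. Fix distinct $a,b\in X$. The base case $n=1$ is the classical fact that a connected, locally connected, locally compact metric space is arcwise connected, so I would assume $n\ge 2$. Rather than produce the $n$ arcs directly, I would prove the contrapositive min-max statement: if $a$ and $b$ cannot be joined by $n$ arcs that are pairwise disjoint apart from their endpoints, then there is a set of at most $n-1$ points of $X$ separating $a$ from $b$, contradicting the hypothesis. This reformulation is exactly the topological analogue of the equality ``maximum number of internally disjoint paths equals minimum size of a vertex cut'' that Menger's theorem supplies on graphs.

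First I would pass to a manageable piece of $X$. Using arcwise connectedness and local compactness, I would fix a compact, connected, locally connected subspace $Y\subseteq X$ containing $a$, $b$, and as many arcs between them as currently exist. For each $m$ I would cover $Y$ by finitely many connected open sets of diameter at most $1/m$ and form the nerve $G_m$, a finite graph whose vertices are the sets of the cover and whose edges record nonempty intersections; local connectedness guarantees that every point has arbitrarily small connected open neighbourhoods, so the cover may be taken with connected pieces and the edges of $G_m$ realised by genuine arcs in $X$ via a splicing construction. In this dictionary, a family of internally disjoint paths in $G_m$ from $a$ to $b$ yields, after splicing, a family of arcs in $X$ that are pairwise disjoint apart from $a,b$; conversely a vertex cut of $G_m$ separating $a$ from $b$ picks out a finite collection of cover elements whose union locally separates $Y$. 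Applying Menger's theorem to each $G_m$ then gives either $n$ internally disjoint paths, hence $n$ disjoint arcs and we are done, or a vertex cut of size at most $n-1$.

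The main obstacle, and the place where the topology really enters, is to turn the combinatorial cuts $\{S_m\}$ into a single honest separating set of at most $n-1$ points of $X$: a cut in the approximating graph $G_m$ blocks only the paths that $G_m$ sees, and one must rule out ``shortcuts'' in $X$ that bypass it. To control this I would select one point from each cover element of $S_m$, obtaining a set of at most $n-1$ points in the compact set $Y$, and pass to a limit $S$ along $m\to\infty$ using compactness of $Y$; since the diameters tend to $0$, the limiting configuration is forced to trap every path. I would then verify that $S$ separates $a$ from $b$ in all of $X$: local connectedness makes the components of $X\setminus S$ open, and the limiting construction ensures the component containing $a$ cannot reach $b$, for otherwise a connecting arc would survive into cofinally many $G_m$ and violate the fact that each $S_m$ was a cut. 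Establishing this faithfulness of the limit, namely that the points of $S$ really separate and are not merely an artefact of the discretisation, is the crux; once it is in place, $|S|\le n-1$ contradicts the non-separation hypothesis and forces the existence of the $n$ disjoint arcs.
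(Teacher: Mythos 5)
First, a point of comparison: the paper does not prove Theorem~\ref{thm-bogensatz} at all --- it is quoted from N\"obeling, Zippin and Whyburn --- so there is no in-paper argument to measure your sketch against. Your overall strategy (nerves of fine covers, graph-theoretic Menger, passage to a limit) is indeed the classical route to this theorem, but the sketch has genuine gaps at precisely the points where the classical proofs do the real work. The central one is the claim that internally disjoint paths in the nerve $G_m$ splice to arcs in $X$ that are disjoint away from $a,b$. Vertex-disjointness of two paths in a nerve does not make the underlying families of open sets disjoint: an internal vertex of one path may be \emph{adjacent} in $G_m$ to an internal vertex of the other, meaning the corresponding cover elements overlap, and the spliced arcs can then intersect; near $a$ and $b$ the overlap is unavoidable, since all $n$ chains begin and end in the same cover elements. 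Repairing this is the heart of the matter: one must produce $n$ chains that are pairwise disjoint \emph{as collections of sets} (except at the ends), refine them in a nested fashion with mesh tending to $0$, and then argue that the intersections of the nested chains are $n$ continua from $a$ to $b$, pairwise disjoint apart from $\{a,b\}$, inside which arcs can be extracted. A single application of Menger to one nerve yields none of this, and without the nested refinement the ``either $n$ paths or a small cut'' dichotomy at a fixed scale $m$ proves nothing.

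The second gap is the reduction to a compact subspace $Y$. The non-separation hypothesis is a statement about $X$, and the contradiction you aim for requires the limit set $S$ of at most $n-1$ points to separate $a$ from $b$ in $X$; but a vertex cut in the nerve of a cover of $Y$ only obstructs chains lying in $Y$, and an arc from $a$ to $b$ in $X\setminus S$ is free to leave $Y$, in which case it induces no walk in $G_m$ and your faithfulness-of-the-limit argument does not apply. (Moreover, ``a compact subspace containing as many arcs between them as currently exist'' is not a well-defined choice.) One needs either an exhaustion of $X$ by suitable compacta together with a diagonal argument, or a correctly localized version of the hypothesis. Both repairs are classical but nontrivial, which is presumably why the paper simply cites the literature for this statement rather than proving it.
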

A well known corollary of this result is that any $n$ points in $X$ lie on a simple closed curve 
(see Theorem~\ref{thm-circle-n-points} and remark in \cite{ThoVel-08-Mengers-theorem-extension}).

In this paper, we prove analogues of these theorems for quasi-arcs and quasi-circles
using quantitative topological arguments.
Quasi-circles arise naturally in geometric function theory and in the study of boundaries of hyperbolic groups,
and our results have consequences for the geometry of such groups.

\subsection{Statement of results}

In 1963, Ahlfors \cite{Ahl-63-qcircle} showed that a Jordan curve $\gamma \subset \R^2$
is the image of $\Sph^1 \subset \R^2$ under some quasi\-con\-form\-al homeomorphism of $\R^2$
if and only if $\gamma$ is \emph{linearly connected}:
\begin{definition}
	A complete metric space $(X,d)$ is \emph{$L$-linearly connected}, for some $L\geq 1$, if for every $x,y \in X$,
	there exists a continuum $J$ containing $x$ and $y$, so that $\diam(J) \leq L d(x,y)$.
\end{definition}
(This is also called $L$-bounded turning.  Note that in the above definition, we may assume $J$ is an arc.)

More generally, a \emph{quasi-circle} (respectively \emph{quasi-arc}) is a 
quasi\-sym\-met\-ric image of the standard Euclidean circle (respectively interval).
Tukia and V\"ais\"al\"a showed that a metric Jordan curve (i.e., a metric space homeomorphic to $\Sph^1$) is a 
quasi-circle if and only if it is doubling and linearly connected \cite{TV-80-qs}, and likewise for metric Jordan arcs.
Recall that a metric space $(X,d)$ is \emph{$N$-doubling}, for some $N \in \N$, if any ball of radius $r>0$
in $X$ can be covered by $N$ balls of radius $r/2$.

In the remainder of this paper, we define a $\lambda$-quasi-circle (or $\lambda$-quasi-arc) to be a metric Jordan curve 
(or metric Jordan arc) that is doubling and $\lambda$-linearly connected.

The spaces we study have the property that they have no local cut points, in the
following quantitatively controlled sense.
\begin{definition}\label{def-ann-lin-conn}
	Let $(X,d)$ be a metric space.  The annulus around $x$ between radii $r$ and $R$ is denoted by 
	$A(x,r,R) = \overline{B}(x,R) \setminus B(x,r)$.
	
	A metric space $(X,d)$ is \emph{$L$-annularly linearly connected}, for some $L \geq 1$,
	if it is connected, and given $r>0$, $p \in X$, any two points $x, y \in A(p,r,2r)$
	lie in an arc $J$ so that $x, y \in J \subset A(p,r/L,2L r)$.
\end{definition}
(We may assume (on replacing $L$ by $8L$) that $X$ is also $L$-linearly connected.)

Our main theorems are quantitative versions of Theorems \ref{thm-bogensatz} and \ref{thm-circle-n-points}.
\begin{theorem}\label{thm-qcircle-bogensatz} % [Quasi-circle {$n$}-Bogensatz]
	Let $X$ be a $N$-doubling, $L$-annularly linearly connected, and complete metric space.
	For any $n \in \N$, there exists $\lambda = \lambda(L,N,n)$ so that any distinct $x, y \in X$
	can be joined by $n$ different $\lambda$-quasi-arcs, where the concatenation of any two
	forms a $\lambda$-quasi-circle.
\end{theorem}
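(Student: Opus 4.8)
The plan is to induct on $n$, building the arcs one at a time while keeping them quantitatively separated. For the base case $n=1$ I would show that any two points are joined by a $\lambda(L,N)$-quasi-arc by the standard refinement construction: writing $\delta=d(x,y)$, the $L$-linear connectivity of $X$ (which follows from annular linear connectivity after enlarging $L$) produces a chain $x=x_0,x_1,\dots,x_k=y$ with consecutive gaps at most $\delta/2$ whose consecutive points are joined inside continua of diameter $\lesssim_L\delta$; recursing on each pair at geometrically decreasing scales and passing to a limit produces a Jordan arc from $x$ to $y$ whose bounded turning is guaranteed by the scale-by-scale diameter control. Doubling then upgrades it to a quasi-arc by the Tukia--V\"ais\"al\"a characterization \cite{TV-80-qs}.

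For the inductive step, suppose $\gamma_1,\dots,\gamma_n$ have been constructed, pairwise disjoint apart from $\{x,y\}$, each a $\lambda_n$-quasi-arc, with each union $\gamma_i\cup\gamma_j$ a $\lambda_n$-quasi-circle. Set $\Gamma=\gamma_1\cup\dots\cup\gamma_n$, a compact doubling set. I would run the same refinement for a new arc from $x$ to $y$, but at every scale reroute the connecting continua away from $\Gamma$: whenever a piece at scale $r$ near a point $p$ would meet $\Gamma$, I replace it using the arc supplied by annular linear connectivity inside $A(p,r/L,2Lr)$, detouring around the offending part of $\Gamma$ at the cost of a diameter distortion depending only on $L$. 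The quantitative heart of the step is to organise these detours so that the output $\gamma_{n+1}$ maintains a \emph{definite} gap from each $\gamma_i$ at every scale, rather than merely being topologically disjoint.

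This definiteness is exactly what is needed, and hardest to arrange, near the common endpoints $x$ and $y$, where all $n+1$ arcs must be mutually close yet mutually non-touching. Here I would use doubling together with the non-degeneracy forced by annular linear connectivity to select, at each dyadic scale $r$ around $x$ (and around $y$), a fixed number of $\gtrsim r$-separated ``escape points'' through which the various arcs cross the sphere $S(x,r)$, routing distinct arcs through distinct families and connecting consecutive escape points of a single family by annular-linear-connectivity arcs that stay in the annulus and so never re-enter the thin region occupied by the other arcs. Doubling bounds from above how many separated crossings can occur at a given scale, so the construction has room to close up after $n$ steps with all constants depending only on $L$, $N$ and $n$; I expect verifying the bounded-turning bound for $\gamma_{n+1}$ and, simultaneously, the no-pinch bound that makes each $\gamma_i\cup\gamma_{n+1}$ a $\lambda$-quasi-circle to be the main obstacle, since both must be extracted from the same scale-by-scale diameter control. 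Granting these estimates, doubling of each arc and of each pairwise union is inherited from $X$, so Tukia--V\"ais\"al\"a \cite{TV-80-qs} identifies each $\gamma_i$ as a quasi-arc and each admissible union as a quasi-circle.
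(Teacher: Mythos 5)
Your overall strategy---induct on $n$, adding one new arc at a time that avoids the union $\Gamma$ of the previously built arcs---differs from the paper's, which instead repeatedly \emph{splits} each existing quasi-arc into two relatively separated quasi-arcs (Proposition~\ref{prop-qarc-rel-split}), doubling the count at each stage, and then invokes Lemma~\ref{lem-qarc-rel-sep-makes-qcircle} to convert the relative separation condition \eqref{eq-rel-sep} into the quasi-circle conclusion. More importantly, your inductive step has a genuine gap at its core. You propose to reroute the connecting continua ``away from $\Gamma$'' using the arc supplied by annular linear connectivity inside $A(p,r/L,2Lr)$; but that hypothesis only guarantees \emph{some} arc in the annulus, with no control whatsoever on whether it meets $\Gamma$. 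Avoiding a prescribed compact set is a global topological matter (it ultimately rests on the fact that no finite set separates $X$, i.e.\ on Theorem~\ref{thm-bogensatz} or Theorem~\ref{thm-zippin}), and it cannot be manufactured by a local detour at a single scale. Even granting that a topologically disjoint new arc exists, upgrading disjointness to a definite, scale-appropriate gap requires a compactness/limiting argument of the kind the paper runs (via \cite[Lemma 3.3]{Mac-10-confdim} inside Lemma~\ref{lem-qarc-split-every-other}, and via Gromov--Hausdorff limits in Lemma~\ref{lem-qarc-join-unif-sep1}); your proposal supplies no such mechanism.

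The second unresolved point is precisely the one you flag as hardest: behaviour near the shared endpoints $x$ and $y$. The separation there must be allowed to degrade \emph{linearly} in the distance to the endpoint --- this is exactly what \eqref{eq-rel-sep} encodes, and it is what makes Lemma~\ref{lem-qarc-rel-sep-makes-qcircle} work --- so one needs a geometric decomposition into annular shells of geometrically decreasing radius around each endpoint, with the splitting and re-joining carried out shell by shell and then straightened by the arc-joining Theorem~\ref{thm-qarc-joining} (the paper's key new tool, which you do not use or replace). Your ``escape points'' heuristic does not supply this: you would need a \emph{lower} bound on the room available at each scale for a new crossing separated from the old ones, whereas doubling only gives an \emph{upper} bound on the number of separated crossings, which works against you rather than for you. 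Finally, note that the concatenation of two arcs sharing both endpoints is not a quasi-circle merely because each arc has bounded turning; the pinching estimate near $x$ and $y$ is a separate claim (the paper's Lemma~\ref{lem-qarc-rel-sep-makes-qcircle}) that must be derived from the relative separation, and your proposal leaves it as an acknowledged but unaddressed obstacle.
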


\begin{theorem}\label{thm-qcircle-n-points}
	Suppose $(X,d)$ is a non-trivial, $N$-doubling, $L$-annularly linearly connected, complete metric space.
	Then any finite set $T \subset X$ lies on a $\lambda$-quasi-circle $\gamma \subset X$,
	where $\lambda = \lambda(L,N,|T|)$.
	Moreover, if $|T| \geq 2$ we can ensure that $\diam(\gamma) \leq \lambda \diam(T)$.	
\end{theorem}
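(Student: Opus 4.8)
The plan is to derive Theorem~\ref{thm-qcircle-n-points} from Theorem~\ref{thm-qcircle-bogensatz} by induction on $m:=|T|$, threading one point at a time onto a quasi-circle through the others. The statement is scale invariant, so I normalise $\diam(T)=1$; as noted after Definition~\ref{def-ann-lin-conn} I may also assume $X$ is $L$-linearly connected, and I write $\lam_0=\lam_0(L,N,n)$ for the constant furnished by Theorem~\ref{thm-qcircle-bogensatz}. The cases $m\le 2$ need no separate construction: applying Theorem~\ref{thm-qcircle-bogensatz} with $n=2$ to $x_1$ together with a second point --- an arbitrary point of $X\setminus\{x_1\}$ if $m=1$, or $x_2$ if $m=2$ --- yields two $\lam_0$-quasi-arcs whose concatenation is a $\lam_0$-quasi-circle through $T$; when $m=2$ each arc has diameter at most $\lam_0 d(x_1,x_2)=\lam_0\diam(T)$, so the diameter bound holds at the base of the induction.

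For the inductive step, fix $p\in T$ and let $\gamma'$ be a $\lam'$-quasi-circle through $T':=T\setminus\{p\}$ with $\diam(\gamma')\le\lam'\diam(T')$, provided by the inductive hypothesis (if $p\in\gamma'$ we are already done, so assume $\rho:=d(p,\gamma')>0$). The $m-1$ points of $T'$ divide $\gamma'$ into $m-1$ edges, and since some point of $T'$ lies on $\gamma'$ within distance $\diam(T)$ of $p$ we have $\rho\le\diam(T)$. I will reroute a small, isolated piece of $\gamma'$ lying near $p$ so that it passes through $p$. Choose a near point $q_0\in\gamma'$ with $d(p,q_0)$ of order $\rho$, apply Theorem~\ref{thm-qcircle-bogensatz} to the pair $p,q_0$ with $n$ large (depending on $N$ and $m$) to obtain many quasi-arcs from $p$ to $q_0$, pairwise disjoint apart from their endpoints, and truncate each at its first point of $\gamma'$; this produces many quasi-arcs issuing from $p$ that meet $\gamma'$ only at their terminal points and are pairwise disjoint apart from $p$. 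From these I will select two, $\beta_1$ and $\beta_2$, whose terminal points $q_1,q_2$ are distinct, lie on a single edge $E$, and bound between them a sub-arc $\eta_0\subset E$ that is \emph{isolated} at scale $\rho$, in the sense that $\gamma'$ meets a ball of radius comparable to $\rho$ around $\eta_0$ only in $\eta_0$ itself; the $N$-doubling hypothesis, which limits how often the $\lam'$-quasi-circle $\gamma'$ can return to a given ball at scale $\rho$, together with the freedom of having many arcs, is what makes such a selection possible. Deleting the interior of $\eta_0$ and inserting $\beta_1\cup\beta_2$ yields a metric Jordan curve $\gamma$ through $T'\cup\{p\}=T$. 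Because $\beta_1\cup\beta_2\subset\overline{B}(p,C\rho)$ for some $C=C(\lam_0)$ and $\rho\le\diam(T)$, we get $\diam(\gamma)\le\diam(\gamma')+C\rho\le(\lam'+C)\diam(T)$, so the diameter estimate propagates.

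The \emph{main obstacle} is to show that $\gamma$ is $\lam$-linearly connected with $\lam=\lam(L,N,m)$. Excising a sub-arc of $\gamma'$ and replacing it by the detour $\beta_1\cup\beta_2$ can, in principle, destroy linear connectivity: the two endpoints of the deleted arc are joined in $\gamma$ only through the detour, whose diameter is of order $\rho$, so if the deleted arc were short --- or if $\gamma'$ folded back close to it --- one would manufacture pairs of points that are close in $X$ but far apart along $\gamma$. This is exactly why the selection above insists that $\eta_0$ be isolated at scale $\rho$ and that $d(q_1,q_2)$ be of order $\rho$: isolation rules out fold-backs of $\gamma'$ across the gap, while matching the size of the gap to the length of the detour keeps the ratio of $X$-distance to $\gamma$-distance bounded for points flanking the former arc. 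Here annular linear connectivity of $X$ enters in routing $\beta_1,\beta_2$ from $p$ out to $\eta_0$ within a controlled region about $q_0$, so that the detour neither re-approaches the gap nor wanders far from it. Granting these geometric choices, the only new configurations to check are near the three junctions $q_1,q_2,p$, where linear connectivity is controlled in terms of $\lam'$, $\lam_0$ and $L$; this bounds $\lam$ for $\gamma$ in terms of $\lam'$, $L$ and $N$.

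Since the base case is reached after exactly $m-2$ insertions, the quasi-circle constant is updated a number of times depending only on $m$, and each update depends only on $L$ and $N$; hence the final curve through $T$ is a $\lam$-quasi-circle with $\lam=\lam(L,N,|T|)$ and $\diam(\gamma)\le\lam\diam(T)$, as required. I expect the delicate point throughout to be the simultaneous achievement of a clean landing for $\beta_1,\beta_2$ and the isolation of $\eta_0$, reconciling the truncation trick (which gives cleanness but not location) with the doubling-based selection (which gives location); it is here that the quantitative topology of Theorem~\ref{thm-qcircle-bogensatz} does the real work.
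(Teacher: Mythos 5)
Your overall strategy---induct on $|T|$ and thread the new point $p$ onto a quasi-circle $\gamma'$ through the remaining points by excising a sub-arc and inserting a detour through $p$---is in the spirit of the paper's Case 1, but the step on which everything hinges is not justified and, as set up, can fail. You correctly identify that you must find two landing points $q_1,q_2$ on one edge with $d(q_1,q_2)\gtrsim\rho$ and with the excised arc isolated at scale $\rho$; otherwise the points of $\gamma'$ flanking the gap are at distance $d(q_1,q_2)\ll\rho$ in $X$ while every arc of $\gamma$ joining them either traverses the detour (which must reach $p$, hence has diameter at least $\rho=d(p,\gamma')$) or goes the long way around $\gamma'$, so linear connectivity is destroyed. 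But nothing in your construction produces such a pair. The arcs from Theorem~\ref{thm-qcircle-bogensatz}, truncated at their first meeting with $\gamma'$, can all land in a cluster of diameter $\ll\rho$ near $q_0$ (they are all aimed at $q_0$, and the relative separation \eqref{eq-rel-sep} only forbids two of them from being close \emph{away} from the endpoints $p,q_0$, so it permits exactly this clustering). Doubling bounds the number of strands of $\gamma'$ in $B(p,C\rho)$, but it says nothing about how the landing points are spread along those strands. The paper's proof exists precisely to circumvent this: it replaces the point $p$ by a quasi-circle $\beta_2$ through $p$ of \emph{definite} diameter, pushes $\gamma'$ a definite distance away from $p$ using annular linear connectedness, and then obtains $2n$ connecting arcs that are uniformly $\delta_*S$-separated via Zippin's theorem (Theorem~\ref{thm-zippin}) combined with a Gromov--Hausdorff compactness argument (Lemma~\ref{lem-qarc-join-unif-sep1}); the uniform separation of the \emph{arcs} then forces the needed separation of the landing points on both circles, and a double pigeonhole (avoiding the marked points, then landing on a common edge) finishes the selection.

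Two further points. First, you never invoke Theorem~\ref{thm-qarc-joining}: even granting a good pair $q_1,q_2$, the set $\beta_1\cup\beta_2$ is two arcs meeting only at $p$ (and a subarc of a quasi-circle need not be a quasi-arc after truncation), and splicing it into $\gamma'$ at $q_1,q_2$ does not yield a linearly connected curve without the straightening machinery at the three junctions; dismissing these as ``controlled'' skips the paper's main technical tool. Second, your induction adds an arbitrary point one at a time, whereas the paper orders the points, splits into two cases according to whether the minimal gap $s=d(x_1,x_2)$ is at least $\delta^{n-1}$, and in the small-gap case uses a pigeonhole on scales to group the points into a near cluster and a far cluster; this is needed because the straightening and separation constants depend on the ratio of the scales being joined, which must be bounded below in terms of $L,N,n$.
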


Note that these results apply to the boundaries of many hyperbolic groups.  
For example, we observe the following.
\begin{corollary}
	Suppose $G$ is a $\delta$-hyperbolic group, which does not virtually split over any finite or two-ended subgroup.
	Then any $n$ geodesics in $G$ lie in the image of an $(L,C)$-quasi-isometry $f:\HH^2 \ra G$,
	where $L$ and $C$ depend only on $G$ and $n$.
\end{corollary}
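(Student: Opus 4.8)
The plan is to pass to the boundary at infinity and apply Theorem~\ref{thm-qcircle-n-points} there, then promote the resulting quasi-circle to a quasi-isometrically embedded hyperbolic plane. First I would fix a visual metric on $\bdry G$. Because $G$ acts cocompactly on its Cayley graph, $\bdry G$ is compact, hence complete, and is $N$-doubling for some $N=N(G)$. The two splitting hypotheses translate into topology of the boundary: since $G$ does not virtually split over a finite subgroup it is one-ended, so by Stallings' theorem $\bdry G$ is connected; since $G$ does not virtually split over a two-ended subgroup, Bowditch's characterization of such splittings via local cut points shows that $\bdry G$ has no local cut points.

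The second step is to upgrade the qualitative statement ``connected with no local cut points'' to the quantitative statement ``$L$-annularly linearly connected'' of Definition~\ref{def-ann-lin-conn}, with $L=L(G)$. The essential input is the approximate self-similarity of $\bdry G$ provided by the quasi-M\"obius action of $G$: up to uniformly bounded distortion, every small metric ball in $\bdry G$ looks like the whole boundary. Combined with the Bonk--Kleiner fact that a connected group boundary without local cut points is linearly connected, this self-similarity should let one realize the two-sided annulus condition of Definition~\ref{def-ann-lin-conn} with a single constant $L$ independent of the scale.

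With $\bdry G$ now satisfying the hypotheses of Theorem~\ref{thm-qcircle-n-points}, I would take the $n$ given geodesic lines, list their (at most $2n$) endpoints $T\subset\bdry G$, and apply the theorem to obtain a $\lambda$-quasi-circle $\gamma\supset T$ with $\lambda=\lambda(L,N,2n)=\lambda(G,n)$. Finally I would convert $\gamma$ into an embedded $\HH^2$. A $\lambda$-quasi-circle is $\eta$-quasisymmetrically equivalent to $\Sph^1=\bdry\HH^2$ with $\eta=\eta(\lambda)$, and by the standard extension theory for boundary maps of visual hyperbolic spaces (Bonk--Schramm, Paulin) this quasisymmetry extends to an $(L',C')$-quasi-isometric embedding $f:\HH^2\ra G$ whose boundary map carries $\Sph^1$ onto $\gamma$, with $L',C'$ depending only on $\eta$, $N$ and $\delta$. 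Since the two endpoints of each of the $n$ geodesics lie on $\gamma=\bdry f(\HH^2)$ and the image of such an embedding is quasiconvex, each geodesic lies within a uniformly bounded neighborhood of $f(\HH^2)$; absorbing this neighborhood into the constants yields the corollary.

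I expect the main obstacle to be the second step. Linear connectedness of $\bdry G$ is classical, but Definition~\ref{def-ann-lin-conn} demands a scale-invariant, two-sided non-separation property, and producing its uniform constant requires genuinely using the dynamics of the $G$-action to promote the purely topological absence of local cut points into quantitative annular connectivity.
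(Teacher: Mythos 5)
Your proposal is correct and follows essentially the same route as the paper: establish that $\bdry G$ with a visual metric is doubling and annularly linearly connected (the paper simply cites \cite[Proof of Corollary 1.2]{Mac-10-confdim} for this, which is the Bowditch-plus-self-similarity argument you sketch), apply Theorem~\ref{thm-qcircle-n-points} to the $2n$ endpoints, and extend the resulting quasi-circle to a quasi-isometrically embedded $\HH^2$ via the Bonk--Schramm extension theorems. The step you flag as the main obstacle is exactly the one the paper outsources to the earlier reference, so no new argument is needed there.
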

\begin{proof}
	The boundary $\bdry G$, given some fixed visual metric,
	is doubling and annularly linearly connected \cite[Proof of Corollary 1.2]{Mac-10-confdim}.
	Let $x_1, \ldots, x_{2n} \in \bdry G$ be the endpoints of the geodesics.
	
	We apply Theorem~\ref{thm-qcircle-n-points} to find a quasi-circle through $x_1, \ldots, x_{2n}$,
	and extend this to find a quasi-isometrically embedded hyperbolic plane $f:\HH^2 \ra G$
	\cite[Theorems 7.4, 8.2]{BS-00-gro-hyp-embed}.
	Up to modifying $f$ by a finite distance, we may assume that the geodesics lie in the image of $f$.
\end{proof}

\subsection{Background and remarks}
Any two points in a connected, locally connected, locally compact metric space can be joined by an arc. %(in fact, complete suffices).
The analogous statement for quasi-arcs was proved by Tukia, and is a key tool in this paper.
\begin{theorem}[{\cite[Theorem 1A]{Tuk-96-qarc},\cite[Corollary 1.2]{Mac-08-quasi-arc}}]\label{thm-tukia-qarc-simple}
	Suppose $(X,d)$ is an $N$-doubling, $L$-linearly connected, complete metric space.  Then there exists
	$\lambda=\lambda(N,L)$ so that any two points in $X$ can be connected by a $\lambda$-quasi-arc.
\end{theorem}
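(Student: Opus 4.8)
The plan is to realize the desired quasi-arc as the limit of a refining sequence of finite ordered chains and then invoke the Tukia--V\"ais\"al\"a characterization (a metric Jordan arc is a quasi-arc if and only if it is doubling and linearly connected). Since any subset of an $N$-doubling space is again doubling with a comparable constant, the doubling requirement for the final arc is automatic; the entire content is therefore to produce a Jordan arc $\gamma$ from $x$ to $y$ whose intrinsic bounded-turning constant is controlled by $N$ and $L$ alone. I would encode $\gamma$ as $f([0,1])$ for a continuous injection $f$ defined first on the dyadic rationals: at level $k$ I keep an ordered family $x = p^k_0, p^k_1, \dots, p^k_{2^k} = y$, insert new points between consecutive pairs to pass to level $k+1$, and take $f$ to be the continuous extension of $j/2^k \mapsto p^k_j$.

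The engine of the construction is a quantitative insertion step driven by $L$-linear connectivity. Given consecutive chain points $a,b$ at scale $t = d(a,b)$, linear connectivity supplies an arc $J_{ab}$ with $a,b \in J_{ab}$ and $\diam J_{ab} \le L t$; I would insert intermediate points along $J_{ab}$ so that the new mesh is at most half the old one while retaining the crucial invariant that \emph{every} subchain has diameter at most $\lambda'$ times the distance between its endpoints, for a fixed $\lambda' = \lambda'(N,L)$. Maintaining this bounded-turning invariant through infinitely many refinements is the heart of the matter, and the step I expect to be the main obstacle. The naive strategy of taking a metric ``midpoint'' of a single connecting continuum fails: a continuum shaped like a narrow spike out to distance $\sim L t$ and back has no point close to both $a$ and $b$, so the diameters of the two halves need not shrink. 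The fix is to exploit connectivity at every scale rather than committing to one continuum, and---this is where $N$-doubling is indispensable---to use the doubling bound to cap how many chain points accumulate in a ball of a given radius, which prevents turning from compounding as one refines.

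With the bounded-turning invariant in hand at all levels, convergence and regularity follow by more routine arguments. Geometric decay of the mesh together with completeness of $X$ yields a well-defined continuous extension $f : [0,1] \to X$, and the level-uniform estimate passes to the limit, so $\gamma = f([0,1])$ is $\lambda'$-linearly connected in the intrinsic sense. The same separation underlying bounded turning forces distinct dyadic parameters to map to distinct points (otherwise the intervening subchain would collapse to a point), so after a standard pruning and reparametrization $f$ is injective and $\gamma$ is a genuine Jordan arc. Being a doubling, linearly connected metric Jordan arc, $\gamma$ is a $\lambda$-quasi-arc with $\lambda = \lambda(N,L)$ by Tukia--V\"ais\"al\"a, completing the proof. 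The two points requiring the most care are (a) keeping mesh decay and the turning constant simultaneously under control across all refinement levels, and (b) verifying injectivity of the limit; both are ultimately consequences of combining linear connectivity at all scales with the doubling property.
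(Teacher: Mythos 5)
Your high-level plan (refine dyadic chains, keep a uniform bounded-turning invariant, pass to the limit, then invoke Tukia--V\"ais\"al\"a) is a reasonable skeleton, and you have correctly identified where the difficulty sits. But the proposal has a genuine gap exactly at that point: the step you call ``the heart of the matter'' --- maintaining the invariant that every subchain has diameter at most $\lambda'$ times the distance between its endpoints, uniformly over infinitely many refinement levels --- is not actually carried out. The fix you offer, namely using $N$-doubling to bound how many chain points lie in a ball of a given radius, does not address the real failure mode. The danger is not that too many points accumulate locally; it is that two portions of the chain originating from \emph{different, far-apart} segments of the previous level can be brought within distance $\ll$ their separation along the chain, because each insertion step is only controlled up to a multiplicative factor $L$ of the local mesh. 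A subchain whose endpoints happen to land near such a near-self-approach then has small endpoint distance but large diameter, and this effect compounds over levels: the naive estimate gives a turning constant growing like $L^k$ at level $k$. Counting points in balls does not prevent this; one must actively \emph{shortcut} the chain wherever two combinatorially distant pieces come metrically close, and then prove that this surgery terminates (or stabilizes) with constants independent of the level. That is precisely the content of the known proofs: Tukia's argument and the proof in \cite{Mac-08-quasi-arc} both build, at each scale $\eps$, a separated family of connected sets $\{V_x\}$ indexed by an $r$-net (this is where doubling genuinely enters, via the separation property (3) in the proof of Proposition~\ref{prop-qarc-join-scale}) and reroute the arc through consecutive $V_x$'s so that the result is an $\alpha\eps$-local $\lambda$-quasi-arc with $\lambda$ \emph{independent of} $\eps$, while $\eps$-following the previous arc; iterating over geometrically decreasing scales and checking that later modifications perturb earlier ones by a geometrically summable amount yields the global quasi-arc. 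Without an analogue of this shortcutting-plus-stability mechanism, your construction does not close.

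A secondary, smaller issue: your appeal to a ``standard pruning and reparametrization'' to get injectivity of the limit map is also not routine here, since pruning loops can destroy the very bounded-turning estimate you worked to preserve (the pruned arc's subarcs are not subchains of your construction). In the cited proofs this is handled by arranging the approximating arcs to be embedded at every stage and controlling how much each stage moves points, rather than by post-hoc pruning of a possibly non-injective limit.
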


For quasi-circles, as far as the author knows, the only non-trivial existence result known prior to this paper is the
following result of Bonk and Kleiner.
(For an analogous statement for certain relatively hyperbolic groups, see \cite[Theorem 1.3]{Mac-Sis-11-relhypplane}.)
\begin{theorem}[{\cite[Theorem 1]{BK-05-quasiarc-planes}}]\label{thm-bdry-group-qcircle}
	If $G$ is a hyperbolic group and its boundary $\bdry G$ is not totally disconnected,
	then $\bdry G$ contains a quasi-circle.
\end{theorem}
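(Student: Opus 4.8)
The plan is to exploit the approximate self-similarity of $\bdry G$ under the $G$-action and to extract a well-behaved \emph{weak tangent} on which the quasi-arc machinery applies. Fix a visual metric on $\bdry G$; then $\bdry G$ is compact, $N$-doubling, and approximately self-similar, in the sense that there is a uniform constant for which every small ball, rescaled to unit size, is uniformly quasisymmetric to a definite-size piece of $\bdry G$. (This is standard, and follows from the cocompactness of the $G$-action on the space of distinct triples of $\bdry G$.) The hypothesis that $\bdry G$ is not totally disconnected provides a nondegenerate continuum $C \subset \bdry G$, that is, a connected component containing at least two points.

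First I would use approximate self-similarity to rescale $C$ to definite size and pass to a pointed Gromov--Hausdorff limit of rescaled copies of $\bdry G$ zooming into $C$, producing a weak tangent $W$. Since doubling is uniform across scales and Hausdorff limits of continua are continua, $W$ is a complete, $N'$-doubling metric space containing a nondegenerate continuum. The essential point --- and the main obstacle --- is to arrange that this continuum is \emph{$L$-linearly connected}: connectedness passes to Hausdorff limits for free, but bounded turning is a scale-invariant quantitative condition that does not. The idea is to choose the rescaling sequence so as to target the scales at which bounded turning is worst; if linear connectivity failed at arbitrarily small relative scales throughout $\bdry G$, then rescaling there and passing to the limit would yield a connected, doubling continuum with unbounded turning, which a compactness argument rules out. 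Carrying this out carefully yields a nondegenerate, complete, $N'$-doubling, $L'$-linearly connected continuum $W_0$.

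With $W_0$ in hand, Theorem~\ref{thm-tukia-qarc-simple} supplies a $\lambda$-quasi-arc between any two of its points. To produce a genuine quasi-circle rather than merely a quasi-arc, I would use that $W_0$, being a tangent of a self-similar space, inherits approximate self-similarity and exhibits no isolated endpoint behaviour, so that one can locate a topological circle inside it and invoke the Tukia--V\"ais\"al\"a characterization (doubling together with linear connectivity) to recognise it as a quasi-circle; alternatively one glues two quasi-arcs meeting only at their endpoints. If moreover the chosen component has no local cut points, the tangent can be taken $L''$-annularly linearly connected and the paper's own Theorem~\ref{thm-qcircle-n-points} applies directly, though ``not totally disconnected'' alone does not guarantee this. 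Finally I would transfer the result back to $\bdry G$: because $W$ arises from the uniformly quasisymmetric rescaling maps of the first step, a quasi-circle in the weak tangent corresponds, with controlled distortion, to a quasi-circle genuinely embedded in $\bdry G$. I expect the linear-connectivity extraction of the second step to be the crux; the self-similarity transfer in the first and last steps is standard for hyperbolic group boundaries, while the arc-to-circle upgrade is the second most delicate point.
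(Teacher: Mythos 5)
You should first note that the paper does not actually prove this statement: it is quoted from Bonk and Kleiner, and the paper only records that their argument combines Theorem~\ref{thm-tukia-qarc-simple} with a dynamical argument and Arzel\`a--Ascoli. Your sketch is in the same general spirit (approximate self-similarity, weak tangents, compactness, Tukia), so the issue is not the choice of route but whether your sketch closes the two steps you yourself flag as delicate. It does not, and both gaps are genuine.

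The first gap is the extraction of a linearly connected continuum. Your proposed contradiction --- rescaling where bounded turning is worst ``would yield a connected, doubling continuum with unbounded turning, which a compactness argument rules out'' --- is not a contradiction: connected, doubling continua with unbounded turning exist (the comb space, or the closed topologist's sine curve), and nothing in your argument prevents the limit from being one of these. Ruling this out requires genuinely new input about boundaries of hyperbolic groups (ultimately topological facts such as local connectedness of nondegenerate components, combined with self-similarity), which you have not supplied. The second and larger gap is the upgrade from quasi-arcs to a quasi-circle. A complete, doubling, linearly connected continuum need not contain any topological circle at all --- the unit interval, or a self-similar dendrite, has every property you have established for $W_0$ --- so ``locate a topological circle inside it'' is unsupported. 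And even granted two quasi-arcs with common endpoints that are disjoint elsewhere, their union need not be a quasi-circle: the two arcs can approach each other arbitrarily closely away from the endpoints, destroying bounded turning. This is precisely why the paper's Lemma~\ref{lem-qarc-rel-sep-makes-qcircle} demands the quantitative relative separation \eqref{eq-rel-sep}, and why manufacturing a second, uniformly separated quasi-arc occupies Sections~\ref{sec-joining-qarcs} and~\ref{sec-qarcs-bogensatz} under the strictly stronger annular hypothesis. Your own caveat that ``not totally disconnected'' does not yield annular linear connectivity is correct, but it closes off the route through Theorem~\ref{thm-qcircle-n-points} and leaves no workable substitute: the arc-to-circle step is exactly what the cited ``dynamical argument and Arzel\`a--Ascoli'' of Bonk and Kleiner exists to handle, and it is missing from your proposal.
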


Theorem~\ref{thm-bdry-group-qcircle} is motivated by the problem of finding surface subgroups in hyperbolic groups:
undistorted surface subgroups give quasi-isometric embeddings of $\HH^2$ in the group,
and quasi-isometric embeddings of $\HH^2$ exactly correspond to quasi-circles in the boundary.
Theorem~\ref{thm-bdry-group-qcircle} showed that there is no geometric obstruction to finding such a surface subgroup
once the group is not virtually free, answering a question of Papasoglu.

This result is proved by using Theorem~\ref{thm-tukia-qarc-simple}, a dynamical argument, and
Arzel\`a-Ascoli; the indirectness involved means that 
this method cannot show that every point in $\bdry G$ lies in a quasi-circle.

We now consider Theorems~\ref{thm-qcircle-bogensatz} and \ref{thm-qcircle-n-points}.
In these results, we cannot weaken the annular
linear connectedness condition to the topological condition of N\"obeling.
For example, the set $X = \{(x,y) : 0 \leq x \leq 1, |y| \leq x^2\}$ is doubling, linearly connected, and has no local cut points,
but there is no quasi-circle in $X$ that contains the point $(0,0)$.

One might hope for a stronger result than Theorem~\ref{thm-qcircle-n-points},
where rather than a quantitative no local cut points condition, we merely assume
a quantitative version of ``cannot be disconnected by removing $N$ points.''
For example, perhaps in a doubling, LLC, complete metric space, any two points lie on a quasi-circle.

However, our arguments fail in this case, as we strongly use rescaling and Gromov-Hausdorff limits of sequences of spaces.
The LLC(2) condition need not be preserved under such limits: 
consider a sequence of larger and larger circles converging to a line.

On the other hand, Theorem~\ref{thm-qcircle-n-points} is sharp in the following two senses.
First, the hypotheses of this theorem do not suffice to ensure that $x_1, \ldots x_n$ lie on $\gamma$
in the cyclic order given.  
(Consider the closed unit square, which is doubling and annularly linearly connected,
and label the four corners clock-wise $x_1, x_2, x_3, x_4$.
There is no topologically embedded circle containing these points in cyclic order $x_1, x_3, x_2, x_4$.)

Second, $\lam$ must depend on $n$, otherwise one could take increasingly dense subsets
of the sphere and find uniform quasi-circles through these sets.  In the limit, this gives a contradiction.

The key technical tool that we use in this paper is a new method of joining two quasi-arcs together to make a quasi-arc.
This is described in Section~\ref{sec-joining-qarcs}.
The ``quasi-arc $n$-Bogensatz'' Theorem~\ref{thm-qcircle-bogensatz} is proved in Section~\ref{sec-qarcs-bogensatz}.
Finally, we prove Theorem~\ref{thm-qcircle-n-points} in Section~\ref{sec-qcircle-n-points}.

\subsection{Notation}
We denote balls in a metric space $(X,d)$ by $B(x,r) = \{y\in X : d(x,y) < r\}$.
The open neighbourhood of $A \subset X$ of size $r$ is
$N(A,r) = \{y\in X : d(y,A)<r\}$.
If $B = B(x,r)$, and $t>0$, then $tB = B(x,tr)$.
Similarly, if $V = N(A,r)$, and $t>0$, then $tV = N(A,tr)$.

If $C$ is a constant depending only on $C_1,C_2$, then we write
$C=C(C_1,C_2)$.

\subsection{Acknowledgements}
I thank Daniel Groves for bringing this question to my attention, and Bruce Kleiner and Alessandro Sisto for helpful comments.

%%%%%%%%%%%%%%%%%%%%%%%%%%%%%%%%%%%%%%%%%%%%%%%%%%%%%%%%%%%%%%%%%
\section{Joining together quasi-arcs}\label{sec-joining-qarcs}

Any arc in a doubling, linearly connected space can be straightened into a quasi-arc.
\begin{theorem}[{\cite[Theorem 1B]{Tuk-96-qarc}, \cite[Theorem 1.1]{Mac-08-quasi-arc}}]\label{thm-tukia}
 Suppose $(X,d)$ is a L-linearly connected,  N-doubling,
 complete metric space.
 For every arc $A$ in $X$ and every $\eps >0$, there is an arc $J$
 that $\eps$-follows $A$, has the same endpoints as $A$,
 and is an $\alpha\eps$-local $\lambda$-quasi-arc,
 where $\lambda = \lambda(L,N) \geq 1$ and $\alpha = \alpha(L,N) >0$.
 That is, for any $x, y \in J$ with $d(x,y) \leq \alpha\eps$, we have
 $\diam(J[x,y]) \leq \lambda d(x,y)$.
\end{theorem}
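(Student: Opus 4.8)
The plan is to build $J$ as the limit of a sequence of arcs $A = A_0, A_1, A_2, \dots$, where $A_{k+1}$ is obtained from $A_k$ by repairing the bounded-turning condition at the single scale $r_k := \delta 2^{-k}$, with base scale $\delta = c(L,N)\,\eps$ to be fixed. Call a pair $x,y$ on an arc $B$ \emph{bad at scale $r$} if $d(x,y) \leq r$ but $\diam(B[x,y]) > \lambda\, d(x,y)$, where $\lambda = \lambda(L,N)$ is a constant chosen at the end. Each repair is local and uses the hypothesis directly: $L$-linear connectedness supplies an arc of diameter at most $L\,d(x,y) \leq L r_k$ joining $x$ to $y$, and I replace the fat subarc $B[x,y]$ by this shortcut. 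Since the old subarc and the shortcut both lie in $\overline{B}(x,(L+1)r_k)$, a single repair displaces points by at most $\sim L r_k$.

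First I would carry out all repairs at a fixed scale $r_k$ at once: choose a maximal collection of scale-$r_k$ bad pairs whose subarcs are pairwise disjoint, ordered along $A_k$, and repair them simultaneously. The $N$-doubling property bounds the number of shortcuts meeting any given ball, so the whole operation is quantitatively controlled, and the displacement of any single point at this stage is $\leq C(L,N)\, r_k$. To keep $A_{k+1}$ embedded, whenever a shortcut meets the remainder of the arc I cut it at its first and last intersection points and discard the enclosed loops; this only decreases the diameter, so the shortcut bound is preserved. The result $A_{k+1}$ is again an arc with the same endpoints, agreeing with $A_k$ away from the repaired balls.

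Summing the displacements over scales gives a geometric series bounded by $C(L,N)\sum_k r_k = C(L,N)\,\delta$. Choosing $c = c(L,N)$ small enough that $C(L,N)\,\delta \leq \eps$, the reparametrized arcs form a Cauchy sequence; completeness of $X$ yields a uniform limit arc $J$ that $\eps$-follows $A$ and, since no repair touches the endpoints, has the same endpoints as $A$. With $\delta = c(L,N)\,\eps$ the cutoff scale $\delta$ is a fixed multiple of $\eps$, so the threshold below which repairs occur is $\alpha\eps$ with $\alpha = \alpha(L,N)$, exactly as required.

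The crux, and the step I expect to be the main obstacle, is showing that the limit $J$ genuinely has bounded turning at all scales $\leq \alpha\eps$: a repair certified at scale $r_k$ must not be undone by the infinitely many finer repairs at scales $r_{k+1}, r_{k+2}, \dots$. Here the key is a \emph{freezing} estimate: once a subarc is certified with bounded-turning constant $\lambda$ at scale $r_k$, the cumulative displacement from all later repairs is at most $C(L,N)(r_{k+1} + r_{k+2} + \cdots) = C(L,N)\, r_k$, a definite fraction of $r_k$, so after enlarging $\lambda$ by a factor depending only on $L,N$ the estimate survives in the limit. Concretely, given $x,y \in J$ with $d(x,y) \leq \alpha\eps$, I would pick $k$ with $r_k \approx d(x,y)$, track the pair back to $A_k$ where it was either already good or just repaired, and absorb the tail displacement into the constant; the doubling condition is what keeps every count and constant depending only on $L$ and $N$, giving $\diam(J[x,y]) \leq \lambda\, d(x,y)$ for all such pairs.
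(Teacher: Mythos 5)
The paper does not prove this theorem itself --- it is quoted from Tukia and from \cite[Theorem 1.1]{Mac-08-quasi-arc} --- but the single-scale mechanism of that proof is reproduced almost verbatim in Proposition~\ref{prop-qarc-join-scale}, and comparing your sketch against it exposes a genuine gap. Your outer architecture (fix one scale at a time, sum a geometric series of displacements, pass to a limit in the complete space) does match the top layer of the cited argument. The problem is the single-scale step, which is where the actual content lies. Repairing a maximal collection of pairwise disjoint bad subarcs does not make the arc good at scale $r_k$: maximality only guarantees that every bad subarc \emph{meets} a repaired one, not that it gets repaired. Concretely, if $A_k[x,y]$ already wanders to distance $\lambda\, d(x,y)$ within its initial portion $A_k[x,u]$ and only a later piece $A_k[u,v]$ is selected for surgery, then after the repair the pair $(x,y)$ is exactly as bad as before, and nothing forces it to be selected at any subsequent scale either (its subarc can keep intersecting other selected subarcs forever). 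The reference avoids this entirely: it does not perform surgery on $A$ but builds a new arc from scratch by crawling through a chain of sets $V_x$ indexed by an $r$-net, whose key feature is the quantitative separation $V_x \cap V_y = \emptyset \implies d(V_x,V_y) > \delta r$ (property (3) in Proposition~\ref{prop-qarc-join-scale}). That separation is what converts ``two points of $J$ are within $\delta r$ of each other'' into ``they lie in consecutive blobs,'' hence into a diameter bound; establishing it is where the doubling hypothesis does real work, not merely in counting shortcuts per ball.

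Two further quantitative claims are wrong as stated, though more easily patched. First, a bad subarc at scale $r_k$ has diameter bounded below by $\lambda\, d(x,y)$ but not above, so points interior to the excised piece are displaced by far more than $L r_k$; the correct statement is only the one-sided containment $A_{k+1} \subset N(A_k, C r_k)$, and the convergence and $\eps$-following arguments must be run with that weaker fact. Second, your ``freezing'' estimate is not a definite fraction of anything: the tail displacement $\sum_{j>k} C(L,N) r_j = C(L,N) r_k$ with $C(L,N)\geq L\geq 1$ is comparable to $d(x,y)$ itself, so with scale ratio $2$ a pair certified at stage $k$ can be completely destroyed by later stages, and enlarging $\lambda$ does not help. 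The cure, used in the reference, is to let consecutive scales differ by a factor $\sigma(L,N)$ chosen small relative to the single-scale constants $s$ and $S$ of Proposition~\ref{prop-qarc-join-scale}, so that the tail really is negligible against the scale at which goodness was certified --- but that only becomes available once the single-scale statement is actually established.
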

The notation we use for arcs is described below.
\begin{definition}[\cite{Mac-08-quasi-arc}]\label{def-iota-follows}
	For any $x$ and $y$ in an embedded arc $A$, let $A[x,y]$ be the closed, possibly trivial,
	subarc of $A$ that lies between them.  Let $A[x,y) = A[x,y] \setminus \{y\}$, and so on.
	
	An arc $B$ {\em $\iota$-follows} an arc $A$ if there
	exists a (not necessarily continuous) map $p:B \rightarrow A$, sending endpoints to endpoints,
	such that for all $x,\,y \in B$, $B[x,y]$ is in the $\iota$-neighbourhood of
	$A[p(x),p(y)]$; in particular, $p$ displaces points at most $\iota$.
\end{definition}

The goal of this section is to refine Theorem~\ref{thm-tukia} to the following situation.
Suppose an arc $I$ is formed from two quasi-arcs joined by an arc $I' \subset I$.
We show how to modify $I$ only near $I'$ to create a quasi-arc.
\begin{theorem}\label{thm-qarc-joining}
	Let $X$ be an $N$-doubling, $L$-linearly connected, complete metric space.
	Let $A$ be an arc formed of three consecutive subarcs $A_1=A[a_0, a_1]$,  $A_2=A[a_1,a_2]$ and
	$A_3=A[a_2,a_3]$.  Suppose that $A_1$ and $A_3$ are $\eps$-local $\lambda$-quasi-arcs, and
	$d(A_1, A_3) \geq 2 \eps$.
	
	Then there exists an $\alpha\eps$-local $\lambda'$-quasi-arc $J$ that $\alpha\eps$-follows $A$,
	for $\alpha = \alpha(L,N,\lambda)>0$ and $\lam'=\lam'(L,N,\lam)>1$.
	Moreover, $J$ contains the initial and final connected components of $A \setminus N(A_2, 2\epsilon)$.
\end{theorem}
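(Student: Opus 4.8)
The plan is to leave the good outer parts of $A$ untouched and to straighten only a central subarc containing $A_2$ via Theorem~\ref{thm-tukia}, then argue that the concatenation is again an embedded local quasi-arc. Let $c_1 \in A_1$ and $c_3 \in A_3$ be the endpoints of, respectively, the initial and final connected components $O_1 = A[a_0, c_1]$ and $O_3 = A[c_3, a_3]$ of $A \setminus N(A_2, 2\eps)$ (so $d(c_1, A_2) = d(c_3, A_2) = 2\eps$), and set $A' = A[c_1, c_3]$. First I would apply Theorem~\ref{thm-tukia} to $A'$ with a parameter $\delta = \delta(L,N,\lam)\,\eps$, small relative to $\eps$, to produce an arc $J'$ with the same endpoints $c_1, c_3$ that $\delta$-follows $A'$ and is an $\alpha_0\delta$-local $\lam_0$-quasi-arc, with $\lam_0 = \lam_0(L,N)$ and $\alpha_0 = \alpha_0(L,N)$. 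The candidate is $J = O_1 \cup J' \cup O_3$; by construction it contains $O_1$ and $O_3$, so the last assertion is immediate once $J$ is shown to be an arc.

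The first task is embeddedness: $J' \cap O_1 = \{c_1\}$ and $J' \cap O_3 = \{c_3\}$ (the outer arcs $O_1 \subset A_1$, $O_3 \subset A_3$ are disjoint since $d(A_1, A_3) \geq 2\eps$). Let $p \colon J' \ra A'$ be the following map. If $z \in J' \cap O_1$, then $d(z, p(z)) \leq \delta$ and $d(z, A_2) \geq 2\eps$; since $\delta < 2\eps$ this forces $p(z) \notin A_2$, and since $d(A_1, A_3) \geq 2\eps > \delta$ it forces $p(z) \notin A_3$. Hence $p(z)$ lies on the inner part $A_1 \cap A'$, and the $\eps$-local $\lam$-quasi-arc property of $A_1$ pins both $z$ and $p(z)$ to within $\lam\delta$ of $c_1$. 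Thus any re-entry of $J'$ into $O_1$ occurs at scale $\lam\delta$ near $c_1$; after a bounded modification of $J'$ on the $\lam\delta$-neighbourhoods of its endpoints, replacing a short initial and final piece by a direct connector furnished by linear connectedness (which, for $\delta$ small, does not meet $O_1$ or $O_3$), we may assume $J' \cap O_1 = \{c_1\}$ and $J' \cap O_3 = \{c_3\}$, so that $J$ is an arc. The separation hypothesis $d(A_1, A_3) \geq 2\eps$ is exactly what decouples the two ends and excludes the central arc $A_2$ from the following-points of these intersections.

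Next I would verify that $J$ is an $\alpha\eps$-local $\lam'$-quasi-arc, taking $x, y \in J$ with $d(x,y) \leq \alpha\eps$ and $\alpha = \alpha(L,N,\lam)$ small. If $x, y$ lie in a single piece the estimate is inherited: from the $\eps$-local $\lam$-quasi-arc property on $O_1$ or $O_3$ (valid since $\alpha\eps \leq \eps$), and from the $\alpha_0\delta$-local $\lam_0$-quasi-arc property on $J'$ (valid once $\alpha\eps \leq \alpha_0\delta$). The remaining case is a pair straddling a seam, say $x \in O_1$ and $y \in J'$. As in the embeddedness step, $d(x, A_2) \geq 2\eps$ together with $\delta$ and $\alpha\eps$ small forces $p(y)$ onto the inner part of $A_1$ near $c_1$, so the order along $J$ reads $x, c_1, p(y)$. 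I would then bound $\diam(J[x,y]) = \diam\bigl(A_1[x, c_1] \cup J'[c_1, y]\bigr)$ by combining $\diam(A_1[x, c_1]) \leq \diam(A_1[x, p(y)]) \leq \lam\, d(x, p(y))$ with $\diam(J'[c_1, y]) \leq \lam_0\, d(c_1, y)$, and comparing $d(x, p(y))$ and $d(c_1, y)$ to $d(x,y)$ via $d(p(y), y) \leq \delta$ and $d(c_1, x) \leq \diam(A_1[x, c_1])$; this yields $\diam(J[x,y]) \leq \lam'\, d(x,y)$ for a suitable $\lam' = \lam'(L,N,\lam)$.

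Finally, the map that is the identity on $O_1$ and $O_3$ and equals the Theorem~\ref{thm-tukia} following map on $J'$ exhibits $J$ as $\alpha\eps$-following $A$, completing the proof. The main obstacle is the seam analysis of the two middle paragraphs: showing simultaneously that the straightened central arc meets each preserved outer arc only at the junction point, and that the concatenation retains the local quasi-arc property there. Both rely on the same mechanism—the separation $d(A_1, A_3) \geq 2\eps$ prevents the two ends from interacting and rules out $A_2$ as a following-point near a seam, while the $\eps$-local $\lam$-quasi-arc quality of $A_1$ and $A_3$ confines all delicate behaviour to a $\lam\delta$-scale around $c_1$ and $c_3$, where choosing $\delta$ small makes the estimates close up.
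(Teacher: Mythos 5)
Your strategy---keep the outer components $O_1,O_3$ untouched, straighten only the central subarc $A'=A[c_1,c_3]$ by a black-box application of Theorem~\ref{thm-tukia}, and concatenate---has a genuine gap at the seams, and it is exactly the gap that forces the paper to reopen the proof of Theorem~\ref{thm-tukia} rather than cite it. The problem is not only embeddedness but quantitative separation. Theorem~\ref{thm-tukia} guarantees that $J'$ $\delta$-follows $A'$ and is an $\alpha_0\delta$-local quasi-arc, but it gives no lower bound on $d(y,O_1)$ for points $y\in J'$ whose following-point $p(y)$ happens to sit near $c_1$. Concretely, $J'$ may pass within distance $10^{-6}\delta$ of a point $x\in O_1$ with $d(x,c_1)=\delta/2$; then $J[x,y]\supset A_1[x,c_1]$ has diameter at least $\delta/2$ while $d(x,y)$ is arbitrarily small, so the ratio $\diam(J[x,y])/d(x,y)$ is unbounded and $J$ is not an $\alpha\eps$-local $\lambda'$-quasi-arc for any choice of constants. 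Your own seam estimate betrays this: you compare $d(x,p(y))$ to $d(x,y)$ via $d(p(y),y)\leq\delta$, which yields only the additive bound $\diam(J[x,y])\lesssim \lambda(d(x,y)+\delta)$, useless when $d(x,y)\ll\delta$. No choice of $\delta$ or $\alpha$ repairs this, since the local quasi-arc condition must hold for all pairs with $d(x,y)\leq\alpha\eps$, however small.

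The embeddedness repair is also unjustified as stated: an arc supplied by $L$-linear connectedness between $c_1$ and a nearby point of $J'$ has small diameter but no reason to avoid $O_1$, so the parenthetical ``which, for $\delta$ small, does not meet $O_1$ or $O_3$'' does not follow. (This part is patchable by a last-intersection-point argument with a buffer between $O_1$ and the seam, but the separation failure above is not.) The paper's proof instead reworks the discrete construction behind Theorem~\ref{thm-tukia}: in Proposition~\ref{prop-qarc-join-scale} the chain sets $V_x$ are built with the extra property (4) that $B(x,r)\cap(A_1\cup A_3)\subset V_x$, so any near-approach of the new arc to $A_1\cup A_3$ is absorbed into a single $V_x$, and property (3) ($V_x\cap V_y=\emptyset\implies d(V_x,V_y)>\delta r$) then supplies precisely the quantitative seam separation your argument lacks; the full theorem follows by iterating this single-scale statement over all scales, as in the cited proof. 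To salvage your outline you would need to add, and prove, a separation statement of the form: away from a controlled neighbourhood of $c_1$, the straightened arc $J'$ stays a definite multiple of $\delta$ away from $O_1$ --- and that is not a consequence of Theorem~\ref{thm-tukia} as stated.
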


This theorem follows the proof of Theorem \ref{thm-tukia} given in \cite{Mac-08-quasi-arc} verbatim,
once we establish the following modified version of \cite[Proposition 2.1]{Mac-08-quasi-arc}.

\begin{proposition}\label{prop-qarc-join-scale}
	We assume the hypotheses of Theorem~\ref{thm-qarc-joining}.
	There exists constants $s=s(L,N,\lambda) >0$ and $S=S(L,N,\lambda)>0$ with the following property:
	for each $\iota \in (0,\eps)$ there exists an arc $J$ that $\iota$-follows $A$, 
	contains the initial and final connected components of $A \setminus N(A_2, 2\iota)$, and satisfies
	\begin{equation} \label{eq-cqa}
		\forall x,y \in J,\  d(x,y) < s\iota \implies
		\diam(J[x,y]) < S\iota \tag{$\ast$}.
	\end{equation}	
\end{proposition}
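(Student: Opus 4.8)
The plan is to keep the two outer pieces of $A$ fixed and to rebuild only the part of $A$ that meets a neighbourhood of $A_2$, applying the single-scale net construction from the proof of \cite[Proposition 2.1]{Mac-08-quasi-arc} to that part alone. Let $b_1$ and $b_3$ be the endpoints, other than $a_0$ and $a_3$, of the initial and final connected components of $A\setminus N(A_2,2\iota)$; thus $A[a_0,b_1]$ and $A[b_3,a_3]$ avoid $N(A_2,2\iota)$ and satisfy $d(b_1,A_2)=d(b_3,A_2)=2\iota$, with $b_1\in A_1$ and $b_3\in A_3$ (the arc must enter $N(A_2,2\iota)$ by the time it reaches $A_2$). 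I keep these two outer arcs, and to the middle arc $A[b_1,b_3]$ I apply \cite[Proposition 2.1]{Mac-08-quasi-arc} to obtain an arc $J_0$ from $b_1$ to $b_3$ that $\iota$-follows $A[b_1,b_3]$ and satisfies $(\ast)$ for pairs in $J_0$, with constants $s_0,S_0$ depending only on $L,N$. Setting $J=A[a_0,b_1]\cup J_0\cup A[b_3,a_3]$, and trimming $J_0$ slightly at its ends so that it meets the kept arcs only at $b_1$ and $b_3$, produces an embedded arc that $\iota$-follows $A$ and contains the required components.

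It then remains to verify $(\ast)$ for $J$, which I would do by cases on the location of a pair $x,y\in J$ with $d(x,y)<s\iota$, for a small constant $s\le\min\{s_0,1\}$ to be fixed. If $x,y$ lie in one kept piece, then $J[x,y]$ is a subarc of $A_1$ or $A_3$, and since $s\iota<\eps$ the $\eps$-local $\lam$-quasi-arc property gives $\diam J[x,y]\le\lam d(x,y)$. If $x,y\in J_0$, the net estimate gives $\diam J[x,y]<S_0\iota$ directly. The pair cannot straddle the two kept pieces: that would force $x\in A_1$ and $y\in A_3$, whence $d(x,y)\ge d(A_1,A_3)\ge 2\eps>s\iota$. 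Hence the only remaining configuration is the straddle of one kept piece and $J_0$, say $x\in A[a_0,b_1]\subset A_1$ and $y\in J_0$, where $J[x,y]=A_1[x,b_1]\cup J_0[b_1,y]$.

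This straddling case is the crux, and it is exactly where the separation hypothesis is used. Writing $y'=p(y)\in A[b_1,b_3]$ for the point that $y$ follows, we have $d(y,y')\le\iota$ and $d(x,y')<(s+1)\iota$. Since $x$ lies outside $N(A_2,2\iota)$ this forces $d(y',A_2)>(1-s)\iota>0$, so $y'\notin A_2$; and since $d(x,y')<2\eps$ while $d(A_1,A_3)\ge 2\eps$, the point $y'$ cannot lie on $A_3$. Therefore $y'\in A_1$, on the same side as $x$, and $b_1$ lies between $x$ and $y'$ along $A_1$, so $A_1[x,b_1]\subseteq A_1[x,y']$. The $\eps$-local quasi-arc property of $A_1$, applied at the scale $d(x,y')<(s+1)\iota$ (comparable to $\iota<\eps$), then bounds $\diam A_1[x,b_1]$ by a multiple of $\iota$, while the $\iota$-following of $J_0$ gives $\diam J_0[b_1,y]\le\diam A_1[b_1,y']+2\iota$, again a comparable multiple of $\iota$; as the two subarcs share $b_1$, summing yields $\diam J[x,y]\le S\iota$. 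I expect the main work to be precisely this seam analysis: choosing $s=s(L,N,\lam)$ small enough that the separation dichotomy above is forced and that the local quasi-arc estimates for $A_1$ genuinely apply at the scale in play, and then taking $S=S(L,N,\lam)$ to be the largest of the constants produced in the three admissible cases. The symmetric argument covers $x\in A[b_3,a_3]$, $y\in J_0$, completing the verification of $(\ast)$.
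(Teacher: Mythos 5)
Your overall strategy --- keep the two outer components verbatim, rebuild only the middle arc $A[b_1,b_3]$ by a black-box application of the single-scale straightening proposition, and then analyse the seam --- is genuinely different from the paper's. The paper instead reruns the whole net construction of \cite[Proposition 2.1]{Mac-08-quasi-arc} on all of $A$ at scale $r=\iota/20L$, with one modification: each net set $V_x$ is enlarged to absorb $B(x,r)\cap(A_1\cup A_3)$ (property (4)), and the new arc leaves $A_1$ at the \emph{first} point $q_0$ whose $V$-set meets the chain covering $A_2$. The payoff of that choice is exactly at the seam: if a kept point $y\in A_1[a_0,q_0]$ is within $\delta r$ of the rebuilt part, properties (3) and (4) force the rebuilt point to lie in $V_{w_0}$, so both points are within about $5Lr<\iota/2<\eps$ of $q_0$, and the $\eps$-local quasi-arc property of $A_1$ is only ever invoked at scale $\ll\eps$.

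This is precisely where your argument has a gap. In your seam case you reduce to bounding $\diam A_1[x,y']$ where $d(x,y')<(1+s)\iota$, and you invoke the $\eps$-local $\lambda$-quasi-arc property of $A_1$ at that scale because it is ``comparable to $\iota<\eps$.'' But the hypothesis only controls pairs at distance $\leq\eps$, and $(1+s)\iota$ exceeds $\eps$ whenever $\iota>\eps/(1+s)$, which is allowed since $\iota$ ranges over all of $(0,\eps)$. At scales just above $\eps$ an $\eps$-local quasi-arc can fold back on itself with no diameter control: take $A_1$ to be a U-shape of width $1.01\eps$ and depth $K\gg\eps$, with $x$ at the top of one leg and $b_1$ near the bottom (forced there by routing $A_2$ close to the bottom of the U). Then $d(x,A[b_1,b_3])\approx 1.01\eps<(1+s)\iota$ while $\diam A_1[x,b_1]\approx K$, so your estimate for $\diam J[x,y]$ fails; your separation facts ($d(x,A_2)\geq 2\iota$, $d(A_1,A_3)\geq 2\eps$) only exclude $y'$ lying on $A_2$ or $A_3$ and do nothing to prevent $A_1$ from approaching \emph{itself} across the seam at scale slightly above $\eps$. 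Since you treat $J_0$ as a black box knowing only that it $\iota$-follows $A[b_1,b_3]$, you cannot exclude a point $y\in J_0$ within $s\iota$ of such an $x$. (A repair is possible --- build $J_0$ to $\iota/3$-follow the middle arc and prove the lower bound $d(x,A[b_1,b_3])\geq\min(2\iota,\eps)$ for kept points $x$ with $d(x,b_1)>\lambda\eps$ --- but that amounts to opening the black box and redoing the geometric control the paper gets from property (4) and the minimal choice of $q_0$.) A secondary, related issue: ``trimming $J_0$ slightly at its ends'' does not ensure $J$ is embedded, since $J_0\subset N(A[b_1,b_3],\iota)$ may meet the interiors of the kept pieces, and confining those intersections to a neighbourhood of $b_1,b_3$ again requires the local quasi-arc property at a scale you have not controlled.
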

\begin{proof}
	We modify the proof of \cite[Proposition 2.1]{Mac-08-quasi-arc}.
	To simplify notation, we replace $L$ by $\max\{L, \lambda\}$.
	
	Let $r = \iota / 20L$, and let $\cN$ be a maximal $r$-separated net in $X$ containing $a_0$ and $a_3$.
	Then there exists $\delta = \delta(L,N,\lambda) \in (0,1)$ and
	a collection of sets $\{V_x\}_{x\in \cN}$ so that each $V_x$ is a union of finitely
	many (closed) arcs in $X$, and for all $x, y \in \cN$:
	\begin{enumerate}
		\item[(1)] $d(x,y) \leq 2r \implies y \in V_x$.
		\item[(2)] $\diam(V_x) \leq 5Lr$.
		\item[(3)] $V_x \cap V_y = \emptyset \implies d(V_x, V_y)  > \delta r$.
		\item[(4)] $B(x,r) \cap (A_1 \cup A_3) \subset V_x$.
	\end{enumerate}
	To show this, we follow the proof of \cite[Lemma 3.1]{Mac-08-quasi-arc},
	with the exception that when we construct $V_x^{(0)}$, we also add closed arcs from $B(x,2r) \cap (A_1 \cup A_3)$
	so that the hypotheses of (4) are satisfied, and arcs joining them to $x$ in $B(x,2rL)$.
	Observe that $\diam(V_x^{(0)}) \leq 4Lr$, so the rest of the proof of \cite[Lemma 3.1]{Mac-08-quasi-arc}
	follows unchanged.
	
	Now cover $A_2$ by connected open arcs which lie in some $B(z,r), z \in \cN$, and take a finite subcover of $A_2$.
	Let $y_1, y_2, \ldots, y_m$ be points in $A_2$ lying in the arcs corresponding to this cover, in the order given by $A_2$,
	and let $z_1, z_2, \ldots, z_m \subset \cN$ be the centres of the associated balls.
	
	The collection of sets $\{V_x\}$ is locally finite, and each $V_x$ is compact,
	so there exists a point $q_0 \in A_1$ that is the first point in $A_1$ to be contained
	in some $V_{w_0}$ which meets $\bigcup_j V_{z_j}$.
	
	Let $K$ be the union of $V_x$ so that $V_x \cap A_3 \neq \emptyset$.
	Define $w_i$ inductively as follows, for $i > 0$.
	If $V_{w_{i-1}} \cap K \neq \emptyset$, set $n = i$, and stop.
	Otherwise, let $k_i = \max\{j: V_{w_{i-1}} \cap V_{z_j} \neq \emptyset\}$, set $w_i = z_{k_i}$, and continue.
	
	Finally, let $q_{n+1}$ be the last point in $A_3$ to be contained in some $V_{w_n}$ meeting $V_{w_{n-1}}$.
	By (1), this process is well defined.
	
	We use this sequence to build our path $J$ in stages.
	Set $J_{-1} = A_1[a_0, q_0]$.
	Let $J_0$ be an arc in $V_{w_0}$ that joins $q_0$ to $q_1 \in V_{w_1}$, where
	$J_0 \cap A[a_0, q_0] = \{q_0\}$ and $J_0 \cap V_{w_1} = \{q_1\}$.
	Now for $i$ from $1$ to $n-1$, let $J_i$ be an arc in $V_{w_i}$ that joins $q_i$ in $V_{w_i}$
	to some $q_{i+1} \in V_{w_{i+1}}$, where $q_{i+1}$ is the first point of $J_i$ to meet $V_{w_{i+1}}$.
	Let $J_n$ be an arc in $V_{w_n}$ that joins $q_n$ to $q_{n+1} \in A[q_{n+1}, a_3]$,
	where $J_n \cap A[q_{n+1}, a_3] = \{q_{n+1}\}$.
	To finish, we set $J_{n+1} = A[q_{n+1}, a_3] = A_3[q_{n+1}, a_3]$.
	
	We claim that the arc $J = J_{-1} \cup J_0 \cup \cdots \cup J_{n+1}$ satisfies our conclusions, for suitable $s$ and $S$.
	
	To show that $J$ $\iota$-follows $A$, define a coarse map $f: J \ra A$ as follows.
	If $x \in J_{-1} \cup J_{n+1}$, let $f(x)=x$.
	If $x \in J[q_0, q_1)$, set $f(x)=q_0$, and if $x \in J[q_n, q_{n+1}]$, set $f(x)=q_{n+1}$,
	For $x \in J[q_i, q_{i+1})$, $i=1, \ldots, n-1$, set $f(x) = y_{k_i} \in A_2 \cap B(z_{k_i},r)$.
	
	It is straightforward to check that $f$ satisfies the definition of $\iota$-following.
	For example, suppose $y \in J_i, y' \in J_{i'}$ and $1 \leq i \leq i' \leq n-1$.
	Then
	\begin{align*}
		J[y,y'] & \subset J[q_i, q_{i'+1}] \subset N(\{w_i, \ldots, w_{i'}\}, 5Lr)
			\\ & \subset N(\{y_{k_i}, \ldots, y_{k_{i'}}\}, 5Lr+r)
			\subset N(A[y_{k_i}, y_{k_{i'}}], 5Lr+r)
			\\ & \subset N(A[f(y), f(y')], \iota).
	\end{align*}
	The other cases follow in similar fashion.
	
	As $f$ is the identity on $J_{-1} \cup J_{n+1}$, and $d(q_0, A_2), d(q_{n+1}, A_2) < \iota$,
	$J$ contains the required components of $A_1$ and $A_3$.
	
	All that remains is to show that $J$ satisfies \eqref{eq-cqa}.
	Suppose that $y \in J_i, y' \in J_{i'}$, with $y < y'$ in $J$, and $d(y, y') < r\delta$.
	There are four cases.
	
	(i) If $i=i'=-1$ or $i=i'=n+1$, we have $\diam(J[y,y']) \leq \lambda d(y,y') \leq Lr\delta$.
	
	(ii) If $0 \leq i,i' \leq n$, then $d(V_{w_i}, V_{w_{i'}}) < r\delta$ so by construction and (3) we have
	$|i-i'| \leq 1$ and thus by (2), $\diam(J[y,y']) \leq 10Lr$.
	
	(iii) If $i=-1, i' \geq 0$ then by (4), $y$ lies in some $V_x$, and $d(V_x, V_{w_{i'}}) \leq d(y,y') < \del r$,
	so by (3) and construction, $i'=0$.
	Thus $d(y', q_0) \leq \diam(V_{w_0}) \leq 5Lr$, and $d(y, q_0) \leq 5Lr + d(y,y') < (5L+\del)r$.
	So
	\[
		\diam(J[y,y']) = \diam(J[y,q_0] \cup J[q_0, y']) \leq \lambda(5L+\del)r + 5Lr \leq 11L^2r.
	\]
	
	(iv) The case $i \leq n$, $i' = n+1$ follows similarly to (iii).
	
	We let $s = \del r / \iota = \del/20L$ and 
	$S = \max\{Lr\delta, 10Lr, 11L^2r\}/\iota = 11L/20$, 
	and have proven the proposition.
\end{proof}

%%%%%%%%%%%%%%%%%%%%%%%%%%%%%%%%%%%%%%%%%%%%%%%%%%%%%%%%%%%%%%%%%
\section{Many quasi-arcs between two points}\label{sec-qarcs-bogensatz} % : the n-Bogensatz

Our goal in this section is the following theorem.
\begin{varthm}[Theorem \ref{thm-qcircle-bogensatz}]
	Let $X$ be a $N$-doubling, $L$-annularly linearly connected, and complete metric space.
	For any $n \in \N$, there exists $\lambda = \lambda(L,N,n)$ so that any distinct $x, y \in X$
	can be joined by $n$ different $\lambda$-quasi-arcs, so that the concatenation of any two
	forms a $\lambda$-quasi-circle.
\end{varthm}

The key part of this theorem is the following proposition that
splits a quasi-arc into two relatively close and separated quasi-arcs.
This uses arguments similar to \cite[Section 3]{Mac-10-confdim}.

\begin{proposition}\label{prop-qarc-rel-split}
	Given $\lambda_0 \geq 1$, $\eps > 0$, there exists $\lambda = \lambda(L, N, \lambda_0, \eps) \geq 1$
	and $\eta = \eta(L, N, \lambda_0, \eps) > 0$ with the following property:
	
	For any $\lambda_0$-quasi-arc $A = A[a,b]$ in
	an $N$-doubling, $L$-annularly linearly connected, complete metric space $X$,
	there exist two $\lambda$-quasi-arcs $J = J[a,b]$ and $J' = J'[a,b]$ with the following
	properties: 
	
	For all $z \in (J \cup J') \setminus \{a, b\}$,
		\begin{align}
			d(z, A) & \leq \eps\, d(z, \{a, b\}), \text{ and} \label{eq-sep-nbhd} \\
			\max\{ d(z, J), d(z, J') \} & \geq \eta\, d(z, \{a, b\}). \label{eq-rel-sep}
		\end{align}
\end{proposition}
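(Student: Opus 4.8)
The plan is to carry out a multi-scale construction in the spirit of \cite[Section~3]{Mac-10-confdim}. Write $t(z)=d(z,\{a,b\})$ for the relative scale at $z$; since $t$ is $1$-Lipschitz and vanishes exactly at $a,b$, both \eqref{eq-sep-nbhd} and \eqref{eq-rel-sep} are \emph{scale-invariant} conditions, measuring closeness to $A$ and mutual separation of $J,J'$ in units of $t$. After rescaling I may assume $\diam(A)=1$; as $A$ is a $\lambda_0$-quasi-arc, $d(a,b)\ge 1/\lambda_0$, so $t$ runs up to a definite fraction of the top scale as $z$ traverses $A$. The decisive point is that the defining property of $L$-annular linear connectedness is itself scale-invariant, so a construction carried out at the top scale can be repeated verbatim in each dyadic annulus $A(a,2^{-k-1},2^{-k})$ and $A(b,2^{-k-1},2^{-k})$ as $z$ runs toward the two endpoints. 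This self-similarity is what will yield \eqref{eq-sep-nbhd}--\eqref{eq-rel-sep} with constants depending only on $L,N,\lambda_0,\eps$.

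The heart of the argument is a local splitting that converts the connectivity furnished by annular linear connectedness into quantitative separation. Choose nodes $a=x_0,x_1,x_2,\dots$ along the $a$-half of $A$ with $t(x_k)=d(x_k,a)\approx 2^{-k}$, and symmetric nodes on the $b$-half; the quasi-arc property of $A$ bounds $\diam A[x_{k+1},x_k]$ by $\lambda_0\,2^{-k}$. Fixing a node $x_k$, set $r\approx \eps\,t(x_k)/L$, chosen so that the excursion scale $2Lr\approx\eps\,t(x_k)$ respects \eqref{eq-sep-nbhd}. Let $u,v\in A$ be the points where $A$ last enters and first leaves $A(x_k,r,2r)$ on either side of $x_k$. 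Applying annular linear connectedness centred at $x_k$ produces an arc $\beta$ from $u$ to $v$ inside $A(x_k,r/L,2Lr)\subset N(A,2Lr)$, which in particular avoids $B(x_k,r/L)$. One strand is to follow $A$ through $x_k$ while the other follows the detour $\beta$; since $\beta$ misses $B(x_k,r/L)$, the strands are separated near $x_k$ by at least $r/L\approx (\eps/L^2)\,t(x_k)$, which is the mechanism behind \eqref{eq-rel-sep} and fixes $\eta\approx\eps/(CL^2)$. Connecting the chosen points at consecutive nodes, again by annular linear connectedness within $A(a,2^{-k-1}/L,2^{-k+1}L)$, produces two arcs $J_0,J_0'$ from $a$ to $b$ that remain in $N(A,C\eps\,t)$ and separated by $\eta\,t$ at every scale.

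To upgrade $J_0,J_0'$ from piecewise to genuine quasi-arcs, I straighten each dyadic piece by Theorem~\ref{thm-tukia} and glue adjacent pieces through their transitional arcs using Theorem~\ref{thm-qarc-joining}; the gap hypothesis $d(A_1,A_3)\ge 2\eps$ there is supplied by the separation already arranged. Because each straightening moves points only within an $\iota$-neighbourhood with $\iota\ll\eta\,2^{-k}$ at scale $2^{-k}$, it preserves both \eqref{eq-sep-nbhd} and \eqref{eq-rel-sep}. Gluing inductively from the top scale downward, the scale-invariant local estimates combine (with the doubling hypothesis controlling overlaps between consecutive scales) into a uniform local quasi-arc bound at every scale, hence into genuine $\lambda$-quasi-arcs $J,J'$ with $\lambda=\lambda(L,N,\lambda_0,\eps)$.

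The main obstacle is precisely the step that turns the purely connective information of annular linear connectedness into two \emph{disjoint}, quantitatively $\eta\,t$-separated arcs: the hypothesis guarantees that points can be joined within controlled annuli, but not a priori that two such joins can be made simultaneously disjoint and separated. Arranging the node choices and the connecting arcs so that $J$ and $J'$ meet only at $a$ and $b$ while \eqref{eq-rel-sep} holds at every scale is the delicate part, and it is handled exactly as in the separated curve-family construction of \cite[Section~3]{Mac-10-confdim}. Once that is in place, together with the scale-invariance of the hypotheses and Theorem~\ref{thm-qarc-joining}, the remaining estimates are routine.
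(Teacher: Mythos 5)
Your overall architecture matches the paper's: a multi-scale decomposition of $A$ into pieces at geometrically decreasing scales toward the two endpoints, a local splitting into two strands at each scale, joins between consecutive scales, and a final straightening via Theorem~\ref{thm-tukia} and Theorem~\ref{thm-qarc-joining}. You also correctly identify where the real difficulty lies. The problem is that the concrete mechanism you propose for the splitting does not do the job, and the step you flag as delicate is exactly the step left unproved.

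Specifically: the detour construction at a node $x_k$ produces an arc $\beta$ from $u$ to $v$ in $A(x_k,r/L,2Lr)$, and you take one strand to follow $A$ through $x_k$ and the other to follow $\beta$. These two strands share the points $u$ and $v$, so they are not disjoint; moreover $\beta$ is only guaranteed to avoid $B(x_k,r/L)$, so away from $x_k$ it may touch or cross $A$ arbitrarily, and the asserted separation ``by at least $r/L$'' holds only in the immediate vicinity of the single point $x_k$, not along the strands. Consequently the claim that connecting consecutive nodes ``produces two arcs $J_0,J_0'$ \dots separated by $\eta\,t$ at every scale'' is not established by anything preceding it --- two arcs obtained independently from annular linear connectedness need not be disjoint at all. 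The paper's proof resolves this differently: each even-indexed subarc $A_{2i}$ is split into two \emph{parallel} arcs that both $\frac14 D_2\delta^{|2i|}$-follow $A_{2i}$ and are $2\eta_1\delta^{|2i|}$-separated, using the compactness-based splitting lemma \cite[Lemma 3.3]{Mac-10-confdim}; consecutive pairs are then joined across the odd subarcs by ``unzipping'' $A$ inside neighbourhoods $V_{2i-1}$, with a second compactness argument (\cite[Lemma 3.5]{Mac-10-confdim}) giving quantitative separation $2\eta_2\delta^{|2i-1|}$ of the joining arcs; and a combinatorial lemma about the sets $B_i, V_i$ (Lemma~\ref{lem-qarc-split-nbhds}) guarantees that these local operations are independent and that separation at non-adjacent scales comes for free. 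Your appeal to ``handled exactly as in \cite[Section 3]{Mac-10-confdim}'' is therefore not a reference to a routine verification but to the entire content of the argument, and the detour-around-a-ball device you substitute for it is not equivalent to that construction. (A smaller issue of the same kind: to apply Theorem~\ref{thm-qarc-joining} you need the two quasi-arc pieces of the \emph{same} strand at consecutive scales to satisfy $d(A_1,A_3)\ge 2\eps$; this is separation within one strand, not the $J$-versus-$J'$ separation you cite, and in the paper it again comes from Lemma~\ref{lem-qarc-split-nbhds}.)
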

\begin{proof}
	We may rescale so that $d(a,b)=1$, and assume that $\eps < 1$.
	Let $\del = 1/10 \lambda_0$.
	
	We consider $A$ in the natural order from $a$ to $b$.
	For each $i \in \N$, let $x_{-i}$ be the first point in $A$ at distance $\del^i$ from $a$,
	and let $x_i$ be the last point in $A$ at distance $\del^i$ from $b$.
	
	Let $D_1 = \eps\delta/3\lam_0$, and for $i \in \Z \setminus \{0\}$, let $B_i = B(x_i, D_1 \del^{|i|})$.
	
	For $i < 0$ let $A_i = A[x_{i-1},x_i]$, let $A_0 = A[x_{-1}, x_1]$, and 
	for $i > 0$ let $A_i = A[x_i, x_{i+1}]$.
	Set $D_2 = D_1\delta/10\lam_0 L$, and for $i \in \Z$ let $V_i = N(A_i, D_2 \del^{|i|})$.	
	\begin{lemma}\label{lem-qarc-split-nbhds}
		These neighbourhoods have the following properties:
		
		(1) If $i \neq j$, then $B_i \cap B_j = \emptyset$.
		
		(2) If $i<0$ and $j < i$, then $V_{i+1} \cap B_j = V_{i+1} \cap V_j = B_i \cap V_j = \emptyset$.
		
		(3) If $i<0$, then $(V_i \cap V_{i-1}) \subset B_{i-1}$.		
	\end{lemma}
	\begin{proof}
		(1) This is immediate.
		
		(2) This follows from the following claim.
			If for some $i<0$, we have $z \in A[a,x_{i-1}], z' \in A[x_i, x_{i+1}]$ then
			$d(z,z') \geq D_1 \del^{|i-1|}+ D_1 \del^{|i+1|}$, for otherwise
			\[
				\diam(A[z,z']) \leq \lam_0 (D_1 \del^{|i-1|}+ D_2 \del^{|i+1|}) \leq \frac12 \del^{|i|},
			\]
			but
			\[
				\diam(A[z,z']) \geq \diam(A[x_{i-1},x_i]) \geq \del^{|i|} - \del^{|i-1|} \geq \frac23 \del^{|i|}.
			\]
			
		(3) If not, there exist $z \in A_{i-1}, z' \in A_{i}$ outside $\frac12 B_{i-1}$,
			so that $d(z,z') \leq D_2 \del^{|i-1|} + D_2 \del^{|i|}$.
			Therefore 
			\[
				\diam(A[z,z']) \leq \lambda_0 D_2 \del^{|i-1|} (1+\del^{-1}) 
					\leq \frac1{10} D_1 \del^{|i-1|}
			\]
			but $A[z,z']$ must pass through the centre of $\frac12 B_{i-1}$,
			so $\diam(A[z,z']) \geq \frac12 D_1 \del^{|i-1|}$, a contradiction.
	\end{proof}
	
	We now split $A$ into two disjoint arcs along the subarcs $A_{2i}$, $i\in\Z$.
	See Figure \ref{fig-splitarc}.
	\begin{figure}
		\begin{center}
		\includegraphics[width=\textwidth]{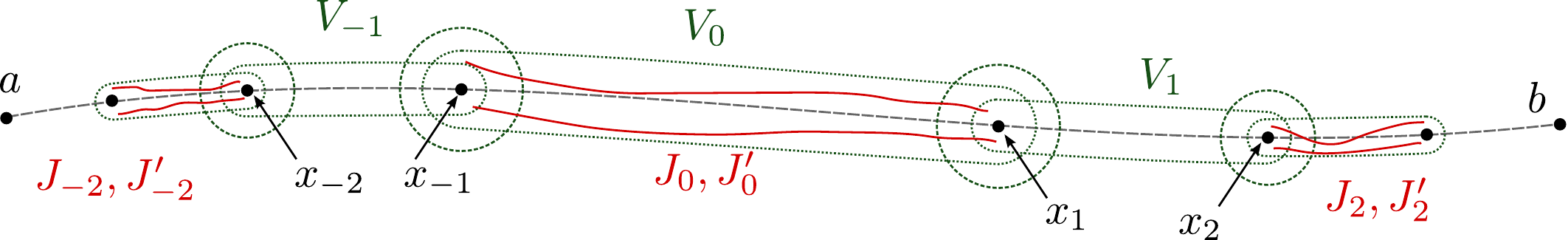}
		\end{center}
 		\caption{Splitting a quasi-arc into a quasi-circle}
		\label{fig-splitarc}
	\end{figure}
	\begin{lemma}\label{lem-qarc-split-every-other}
		For $i \in \Z$ we can find two $\lam_1$-quasi-arcs $J_{2i}, J_{2i}'$ that
		$\frac12 D_2 \del^{|2i|}$-follow $A_{2i}$, and are $\eta_1 \del^{|2i|}$ separated,
		where $\eta_1 = \eta_1(L,N,\lam_0,\eps)>0$ and $\lam_1=\lam_1(L,N,\lam_0,\eps)>1$.
	\end{lemma}
	\begin{proof}
		Using \cite[Lemma 3.3]{Mac-10-confdim} with ``$\eps$'' equal to $\frac14 D_2 \del^{|2i|}$, 
		we split $A_{2i}$ into two $2\eta_1 \del^{|2i|}$ separated arcs that $\frac14 D_2 \del^{|2i|}$-follow 
		$A_{2i}$, for some $\eta_1 = \eta_1(L,N,\eps, \lambda_0) \in (0, \frac14 D_2)$.
		We then apply Theorem~\ref{thm-tukia} to these arcs with ``$\eps$'' equal to 
		$\frac12 \eta_1 \del^{|2i|}$, to get
		two $\frac12 \alpha\eta_1 \del^{|2i|}$-local $\lambda_1'$-quasi-arcs $J_{2i}, J_{2i}'$
		that $\frac12 D_2 \del^{|2i|}$-follow $A[x_1, y_1]$ and are $\eta_1 \del^{|2i|}$-separated,
		for suitable $\alpha=\alpha(L,N)$ and $\lambda_1' = \lambda_1'(L,N)$.
		
		Every $\beta$-local $\mu$-quasi-arc of diameter $D$ is a $\max\{\mu, D/\beta\}$-quasi-arc,
		and these arcs have diameter at most $2\lam_0 \del^{|2i|}$, so $J_{2i}, J_{2i}'$ are
		$\lam_1$-quasi-arcs, for $\lam_1=\lam_1(\alpha, \eta_1, \lam_0, \lam_1')$.
	\end{proof}
	
	Now, following the proof of \cite[Lemma 3.5]{Mac-10-confdim}, for
	each $i \in \Z$, one can join the pair of arcs $J_{2i}, J'_{2i}$ to the arcs $J_{2i-2}, J'_{2i-2}$
	inside $\frac12 B_{2i-1} \cup \frac12 V_{2i-1} \cup \frac12 B_{2i-2}$, with control on the separation
	between the resulting arcs.  
	We prove this in the case $i \leq 0$; $i>0$ is handled similarly.

	The separation properties of Lemma \ref{lem-qarc-split-nbhds} ensure that the following process
	can be applied independently in each location.

	\emph{Topological joining:}
		Join the endpoints of $J_{2i}, J'_{2i}$ to $A$ inside the ball
		$B(x_{2i-1}, \frac12 L D_2 \del^{|2i|}) = ({L D_2}/{2D_1 \del}) B_{2i-1}$.
		Similarly, join the endpoints of $J_{2i-2}, J'_{2i-2}$ to $A$ inside $({L D_2}/{2D_1}) B_{2i-2}$.
		
		We ``unzip'' $A$ along this segment to join the two pairs of arcs $J_{2i}, J'_{2i}$ and 
		$J_{2i-2}, J'_{2i-2}$  by two disjoint arcs $\tilde{J}_{2i-1}, \tilde{J}_{2i-1}'$ in $\frac{1}{4}V_{2i-1}$
		(see \cite[Lemma 3.1, Lemma 3.5]{Mac-10-confdim}).
		This involves discarding the ends of the four given arcs, but all changes
		take place inside $\frac14 B_{2i-2} \cup \frac14 V_{2i-1} \cup \frac14 B_{2i-1}$.
	
	\emph{Quantitative control:}
		As in \cite[Lemma 3.5]{Mac-10-confdim}, compactness arguments ensure that
		$d(\tilde{J}_{2i-1}, \tilde{J}_{2i-1}') \geq 2\eta_2 \del^{|2i-1|}$,
		for some value $\eta_2 = \eta_2(L,N,\eps, \lam_0, \lam_1)>0$.
	
	We now straighten these separated arcs into quasi-arcs.
	
	\emph{Straightening:}
		We assume, after swapping $J_*, J_*'$ if necessary, that $\tilde{J}_{2i-1}$ joins
		$J_{2i}$ and $J_{2i-2}$, and that $\tilde{J}_{2i-1}'$ joins $J'_{2i}$ and $J'_{2i-2}$.
		
		We apply Theorem~\ref{thm-qarc-joining} to $J_{2i-2} \cup \tilde{J}_{2i-1} \cup J_{2i}$
		with ``$\eps$'' equal to $\frac12 \eta_2 \del^{|2i-1|}$, to straighten the arc
		into a $\lam_2$-quasi-arc $J_{2i-2} \cup J_{2i-1} \cup J_{2i}$, making changes
		only in $\frac12 B_{2i-2} \cup \frac12 V_{2i-1} \cup \frac12 B_{2i-1}$.
		Here $\lam_2 = \lam_2(L,N,\eta_2, \lam_1) \geq \lam_1$.
		Again, we may discard ends of $J_{2i}, J'_{2i}, J_{2i-2}, J'_{2i-2}$ in
		$\frac12 B_{2i-2} \cup \frac12 B_{2i}$.
		
	We claim that the arcs $J = \bigcup_{i \in \Z} J_i$ and $J' = \bigcup_{i \in \Z} J'_i$
	satisfy our requirements.
	\begin{lemma}
		$J$ and $J'$ are $\lam$-quasi-arcs, for $\lambda = \lambda(L,N,\lambda_0,\eps)$.
	\end{lemma}
	\begin{proof}
		Suppose $x,y \in J$, where $x \in J_i$ and $y \in J_j$, $i \leq j$.
		It suffices to consider the following three cases.
		
		If $|i-j| \leq 1$, then $\diam(J[x,y]) \leq \lam_2 d(x,y)$.
		
		If $i<0<j$ then $d(a,x), d(b,y) \leq \frac15$, so 
		$\diam(J[x,y]) \leq \diam(J) \leq 2\lam_0 \leq 4\lam_0 d(x,y)$.
		
		If $i+1<j\leq 0$, then $x \in \frac12 B_{i-1} \cup \frac12 V_i \cup \frac12 B_i$,
		and $y \in \frac12 B_{j-1} \cup \frac12 V_j \cup \frac12 B_j$ (where $B_0=B_1$).
		Thus by Lemma~\ref{lem-qarc-split-nbhds}, $d(x,y) \geq \frac12 D_2 \del^{|j|}$, so
		\[
			\diam(J[x,y]) \leq \diam(J[a, x_j]) \leq 2\lam_0 \del^{|j|} \leq \frac{4\lam_0}{D_2} d(x,y).
		\]
		We set $\lam = \max \{ \lam_2, 4 \lam_0 /D_2 \}$, and are done.
	\end{proof}
	
	It remains to check the neighbourhood and separation conditions.
	\begin{lemma}
		$J$ and $J'$ satisfy \eqref{eq-sep-nbhd}.
	\end{lemma}
	\begin{proof}
		It suffices to consider $z \in J_i$, $i \leq 0$.
		If $z \in \frac12 B_{i-1}$, then $d(z,A) < \frac12 D_1 \del^{|i-1|}$, and
		$d(z,a) > \del^{|i-1|} (1-\frac12 D_1) > \frac12 \del^{|i-1|}$.
		Therefore, as $D_1 < \eps$, \eqref{eq-sep-nbhd} holds.
		Similarly, if $z \in \frac12 B_i$, \eqref{eq-sep-nbhd} holds.
		
		It remains to check when $z \in \frac12 V_i$.
		Then there exists some $z' \in A_i$ so that $d(z,z') \leq \frac12 D_2 \del^{|i|}$.
		Thus as $d(a, A_i) \geq \del^{|i-1|}/\lam_0$, 
		\[
			d(z,a) \geq d(z',a) - \frac12 D_2\del^{|i|} \geq \del^{|i|} (\frac{\del}{\lam_0} -\frac{D_2}{2})
				\geq \frac{1}{20\lam_0^2} \del^{|i|}.
		\]
		Since $d(z,A) \leq \frac12 D_2 \del^{|i|}$, and ${20\lam_0^2 D_2}/{2} \leq \eps$, we are done.
	\end{proof}
	
	\begin{lemma}
		$J$ and $J'$ satisfy \eqref{eq-rel-sep}, for some $\eta = \eta(L,N,\eps,\lam_0)$.
	\end{lemma}
	\begin{proof}
		Suppose $z \in J_i \subset J$, for $i \leq 0$.
		Let $z' \in J'$ be the closest point to $z$.
		If $z' \in J'_{i-1} \cup J'_i \cup J'_{i+1}$, then
		$d(z,z') \geq \min \{\eta_1 \del^{|i|}, \eta_2 \del^{|i-1|} \}$.
		Otherwise, by Lemma~\ref{lem-qarc-split-nbhds},
		$d(z,z') \geq \frac12 D_2 \del^{|i|}$.
	\end{proof}
	
	This completes the proof of Proposition~\ref{prop-qarc-rel-split}.
\end{proof}

Observe that the relative separation condition \eqref{eq-rel-sep} proven above 
suffices to show that we have a quasi-circle.
\begin{lemma}\label{lem-qarc-rel-sep-makes-qcircle}
	If $J, J'$ are two $\lambda$-quasi-arcs with the same endpoints $a,b$, 
	and satisfying \eqref{eq-rel-sep} for some $\eta \in (0,1)$,
	then $\gam = J \cup J'$ is a $6\lambda / \eta$-quasi-circle.	
\end{lemma}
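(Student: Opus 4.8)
The plan is to verify the three requirements in the definition of a $\lam$-quasi-circle in turn: that $\gam$ is a metric Jordan curve, that it is doubling, and that it is $(6\lam/\eta)$-linearly connected. First I would observe that \eqref{eq-rel-sep} forces $J \cap J' = \{a,b\}$: if some $z \neq a,b$ lay in both arcs, then $d(z,J) = d(z,J') = 0$, contradicting \eqref{eq-rel-sep} since $\eta\, d(z,\{a,b\}) > 0$. Hence $\gam$ is the union of two arcs meeting only at their common endpoints, so it is homeomorphic to $\Sph^1$. Doubling is then immediate, since $\gam$ is a union of the two doubling sets $J$ and $J'$, and a finite union of doubling subsets of a metric space is doubling.

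The substance is linear connectedness. Given $x,y \in \gam$ with $d = d(x,y)$, I must exhibit a subarc of $\gam$ containing both of diameter at most $(6\lam/\eta)d$. If $x,y$ lie in the same arc, say both in $J$, then $J[x,y]$ works directly, as $\diam(J[x,y]) \leq \lam d \leq (6\lam/\eta)d$. So the main case is $x \in J$, $y \in J'$, the endpoint cases being handled separately and trivially. Here I would use \eqref{eq-rel-sep} to locate both points near the endpoint set: since $x \in J$ we have $d(x,J)=0$, so \eqref{eq-rel-sep} gives $d(x,J') \geq \eta\, d(x,\{a,b\})$; as $y \in J'$ we have $d(x,J') \leq d$, whence $d(x,\{a,b\}) \leq d/\eta$, and symmetrically $d(y,\{a,b\}) \leq d/\eta$.

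The key observation, and the one place where care is needed, is that I can route through whichever endpoint $p \in \{a,b\}$ is nearest to $x$, regardless of which endpoint is nearest to $y$. Taking $\Gamma = J[x,p] \cup J'[p,y]$, the quasi-arc bounds give $\diam(J[x,p]) \leq \lam\, d(x,p) \leq \lam d/\eta$, while $d(p,y) \leq d(p,x) + d(x,y) \leq d/\eta + d \leq 2d/\eta$ (using $\eta < 1$) yields $\diam(J'[p,y]) \leq 2\lam d/\eta$; since the two subarcs share only the point $p$, they form an arc with $\diam(\Gamma) \leq 3\lam d/\eta \leq (6\lam/\eta)d$. The tempting move is to route through a common nearest endpoint, which fails when $x$ and $y$ are near different endpoints; the point is that this is unnecessary, as \eqref{eq-rel-sep} already caps $d(p,y)$ through the triangle inequality. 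The generous constant $6\lam/\eta$ then leaves ample slack to absorb the degenerate cases where $x$ or $y$ coincides with an endpoint.
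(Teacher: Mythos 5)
Your proof is correct, and its overall architecture matches the paper's: Jordan curve and doubling are dispatched immediately, and the substance is linear connectivity for $x\in J$, $y\in J'$, using \eqref{eq-rel-sep} to bound $d(x,\{a,b\})$ and $d(y,\{a,b\})$ by $d(x,y)/\eta$ and then routing a subarc of $\gam$ through an endpoint. Where you differ is in the final estimate: the paper routes both half-arcs through the \emph{same} endpoint $a$, which forces a three-way case analysis on the positions of $x$ and $y$ relative to $\{a,b\}$ (and it is exactly the worst case of that analysis that produces the constant $6\lam/\eta$). Your observation that one should route through the endpoint $p$ nearest to $x$ and control $d(p,y)$ by the triangle inequality ($d(p,y)\leq d(p,x)+d(x,y)\leq 2d(x,y)/\eta$) collapses the case analysis into a single computation and in fact yields the sharper bound $\diam(\Gamma)\leq 3\lam d(x,y)/\eta$, so the stated constant $6\lam/\eta$ holds with room to spare. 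The only point worth making explicit is the one you do make: $J[x,p]$ and $J'[p,y]$ meet only at $p$ (since $J\cap J'=\{a,b\}$ and neither subarc reaches the other endpoint), so their union really is a subarc of $\gam$ and the diameters add.
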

\begin{proof}
	Clearly $\gam$ is a topological circle.
	Let $x,y \in \gam$ be two points we wish to check for linear connectivity.
	The only non-trivial case is when (up to relabelling) $x \in J \setminus \{a, b\}$ and
	$y \in J' \setminus \{a, b\}$.
	
	First suppose that $d(\{x,y\}, \{a, b\}) \geq \frac12 d(a,b)$,
	Then by \eqref{eq-rel-sep}, we have $d(x,y) \geq \eta \cdot \frac12 d(a,b)$,
	so
	\begin{equation}\label{eq-simple-diam-bound}
		\diam(\gam[x,y]) \leq \diam(\gam) \leq 2\lambda d(a,b) \leq \frac{4\lambda}{\eta} d(x,y),
	\end{equation}
	where $\gam[x,y]$ denotes an appropriate subarc of $\gam$.
	
	Otherwise, we may suppose that $d(x,a) \leq \frac12 d(a,b)$, and so $d(x,y) \geq \eta \cdot d(x,a)$.
	If $d(x,y) \geq \frac13 d(a,b)$ then as in \eqref{eq-simple-diam-bound} we have $\diam(\gam) \leq 6\lam d(x,y)$.
	So we assume that $d(x,y) \leq \frac13 d(a,b)$,
	giving that $d(y,a) \leq \frac56d(a,b)$, hence $d(y,b) \geq \frac15 d(y,a)$.
	Thus $d(x,y) \geq \eta \cdot \frac15 d(y,a)$.
	Therefore,
	\begin{align*}
		\diam(\gam[x,y]) & \leq \diam(J[a,x]) + \diam(J'[a,y]) 
		\leq \lambda d(a,x) + \lambda d(a,y) \\
		& \leq \frac{\lambda}{\eta} d(x,y)+\frac{5\lambda}{\eta} d(x,y) = \frac{6\lambda}{\eta} d(x,y). \qedhere
	\end{align*}
\end{proof}

We now complete the proof of the ``quasi-arc $n$-Bogensatz.''
\begin{proof}[Proof of Theorem~\ref{thm-qcircle-bogensatz}]
	We may assume that $n = 2^m$.
	
	We claim that by induction on $m$, we can find $\kappa_m = \kappa_m(L, N) \geq 1$ and $\eta_m = \eta_m(L,N) \in (0,1)$
	so that there are $2^m$ different $\kappa_m$-quasi-arcs from $x$ to $y$ that pairwise satisfy \eqref{eq-rel-sep}
	with $\eta = \eta_m$.
	
	The $m=0$ case follows from Theorem~\ref{thm-tukia-qarc-simple}, finding a $\kappa_0$-quasi-arc
	between $x$ and $y$, where $\kappa_0 = \kappa_0(L,N)$.  We set $\eta_0 = 1$.
	
	For the induction step with $m \geq 1$,
	apply Proposition~\ref{prop-qarc-rel-split} to each of the $2^{m-1}$ previous $\kappa_{m-1}$-quasi-arcs,
	with $\eps_m = \frac{1}{4} \eta_{m-1}$.
	This results in $2^m$ different $\kappa_m$-quasi-arcs, pairwise satisfying \eqref{eq-rel-sep} for some
	value of $\eta$ which we denote by $\eta_m$.  (Here $\kappa_m = \kappa_m(L,N,\kappa_{m-1},\eps_m)>1$, and
	$\eta_m = \eta_m(L,N,\kappa_{m-1},\eps_m)>0$.)
	
	Finally, Lemma~\ref{lem-qarc-rel-sep-makes-qcircle} completes the proof.
\end{proof}

%%%%%%%%%%%%%%%%%%%%%%%%%%%%%%%%%%%%%%%%%%%%%%%%%%%%%%%%%%%%%%%%%
\section{Quasi-circles through \emph{n} points}\label{sec-qcircle-n-points}

The following corollary of the $n$-Bogensatz is well known.
To motivate the proof of Theorem~\ref{thm-qcircle-n-points}, we include a short proof.
\begin{theorem}\label{thm-circle-n-points}
	Let $X$ be a connected, locally connected and locally compact metric space.
	If $n \geq 2$ and $X$ is not disconnected by the removal of any $n-1$ points, 
	then any $n$ points in $X$ lie on a simple closed curve.
\end{theorem}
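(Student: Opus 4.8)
The plan is to induct on $n$, using the $n$-Bogensatz (Theorem~\ref{thm-bogensatz}) at each stage to graft the new point onto a simple closed curve carrying the previous points. Observe first that if $X$ cannot be disconnected by removing any $n-1$ points, then it cannot be disconnected by removing any $k \le n-1$ points either, so the weaker hypotheses needed lower down the induction are automatically available. For the base case $n=2$, the hypothesis says $X$ has no cut points, so Theorem~\ref{thm-bogensatz} supplies two arcs joining $x_1$ to $x_2$ that are disjoint apart from their endpoints; their union is a simple closed curve through $x_1$ and $x_2$.

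For the inductive step I assume the result for $n-1$ points and take distinct $x_1,\dots,x_n$. By induction there is a simple closed curve $J$ through $x_1,\dots,x_{n-1}$, and I may assume $x_n \notin J$, since otherwise there is nothing to prove. Applying Theorem~\ref{thm-bogensatz} (with the given $n$) to the distinct points $x_n$ and $x_1\in J$ yields $n$ arcs from $x_n$ to $x_1$, pairwise disjoint apart from their endpoints. Each such arc leaves $x_n\notin J$ and ends on $J$, so I truncate it at its first point $a_k$ lying on $J$. This produces $n$ arcs $\alpha_1,\dots,\alpha_n$ issuing from $x_n$, pairwise disjoint except at $x_n$, with $\alpha_k\cap J=\{a_k\}$ and the landing points $a_1,\dots,a_n$ distinct.

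The crux is a pigeonhole argument forcing two landing points into a common gap. The points $x_1,\dots,x_{n-1}$ divide $J$ into $n-1$ closed subarcs $G_1,\dots,G_{n-1}$, consecutive ones meeting only in a single $x_i$. I claim some $G_i$ contains two of the $a_k$: if not, then for each $k$ the collection of gaps containing $a_k$ would be pairwise disjoint across different $k$, yet an interior landing point occupies one gap and a landing point equal to some $x_i$ occupies two, so the total occupancy is at least $n$, exceeding the $n-1$ gaps available --- a contradiction. Hence there are indices $j\ne k$ with $a_j,a_k$ both in some $G_i$, so the subarc of $J$ joining them inside $G_i$ contains no $x_i$ in its interior. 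Replacing that subarc by $\alpha_j\cup\alpha_k$ --- an arc through $x_n$ meeting $J$ only at $a_j$ and $a_k$ --- yields a simple closed curve through all of $x_1,\dots,x_n$, completing the induction. The main obstacle is precisely this pigeonhole step: the count is tight ($n$ landing points against $n-1$ gaps), and the only genuine subtlety is a landing point coinciding with some $x_i$, which is exactly what the ``a wall-point occupies two gaps'' bookkeeping resolves. The remaining verifications --- that truncation preserves disjointness and that the rerouting produces a genuine Jordan curve --- are routine.
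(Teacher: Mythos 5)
Your overall strategy (induct on $n$, take a simple closed curve $J$ through $x_1,\dots,x_{n-1}$, attach $x_n$ by Bogensatz arcs, and pigeonhole over the $n-1$ gaps) is the same as the paper's, but there is a genuine gap at the step where you assert that the truncated arcs are ``pairwise disjoint except at $x_n$'' with ``the landing points $a_1,\dots,a_n$ distinct.'' You apply Theorem~\ref{thm-bogensatz} to the pair $(x_n,x_1)$ with $x_1\in J$. The theorem only gives arcs that are disjoint \emph{apart from their endpoints}, and $x_1$ is a shared endpoint that lies on $J$; nothing prevents two, or even all $n$, of the arcs from meeting $J$ for the first (and only) time at $x_1$. (Take $X=\R^2$, $J$ the unit circle, $x_1=(1,0)$ and $x_n$ outside the disc: one can run arbitrarily many disjoint arcs from $x_n$ to $x_1$ through the exterior of $J$.) In that situation $a_j=a_k=x_1$ for several indices $j\neq k$, the corresponding truncated arcs share both endpoints, and the pigeonhole count collapses: if $m\geq 3$ arcs land at $x_1$ you have only $n-m+1$ distinct landing points contributing at most $2+(n-m)\leq n-1$ gap--incidences, so no gap is forced to contain two \emph{distinct} landing points. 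Nor can you use two arcs landing at the same point: their union is a loop attached to $J$ at the single point $x_1$, and splicing it in produces a wedge of circles, not a Jordan curve through $x_1,\dots,x_n$. So the verification you dismiss as routine --- that truncation preserves disjointness and distinctness of the $a_k$ --- is precisely where the argument breaks.

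The paper avoids this by an auxiliary construction: it glues a closed disc $D$ to $\gamma$ along $n$ boundary points and applies Theorem~\ref{thm-bogensatz} in the glued space $Y$ to the pair $(x_*,x_n)$, where $x_*$ is the centre of $D$. Since neither endpoint lies on $\gamma$, the $n$ arcs are disjoint everywhere except at two points off $\gamma$, so their (last) intersection points $z_1,\dots,z_n$ with $\gamma$ are automatically distinct, and the pigeonhole over the $n-1$ gaps goes through. To repair your argument you need some such device (or an appeal to a Zippin-type statement like Theorem~\ref{thm-zippin} between suitable compacta) that guarantees $n$ genuinely distinct landing points on $J$.
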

\begin{proof}
	The $n = 2$ case is just a restatement of Theorem~\ref{thm-bogensatz}.
% 	provided $X$ is non-trivial, this implies the $n=1$ case also.
	
	We prove the $n > 2$ case by induction.
	Suppose $x_1, \ldots, x_n$ are given.
	By induction, we can find a simple closed curve $\gamma$ containing $x_1, \ldots, x_{n-1}$;
	we relabel so that they are in the cyclic order $x_1, \ldots x_{n-1}$.
	Let $D$ be a closed disc with centre labelled $x_*$, and choose subsets $\{y_1, \ldots y_n\} \subset \partial D$ and 
	$\{y_1', \ldots, y_n'\} \subset \gamma$.
	Let $Y$ be the topological space formed from $D$ and $X$ by gluing together $y_i$ and $y_i'$ for each $1 \leq i \leq n$.
% 	Let $Y$ be the topological space formed %by coning off the set $\{x_1, \ldots, x_{n-1}\}$ in $X$, i.e.\ 
% 	by attaching an interval to each point $x_1, \ldots, x_{n-1}$, and identifying the other endpoints as
% 	a point $x_*$.
	
	The space $Y$ is connected, locally connected, locally compact, and cannot be disconnected by the removal of 
	any $n-1$ points.  Thus Theorem~\ref{thm-bogensatz} gives $n$ disjoint arcs 
	$\alpha_1, \ldots, \alpha_n$ from $x_*$ to $x_n$ in $Y$.
	For each $i=1, \ldots, n$, let $\beta_i$ be the closed, connected subarc of $\alpha_i$ which 
	contains $x_n$ and exactly one point $z_i$ of $\gamma$.  
	Each point $z_i$ lies in one of $\gam[x_1,x_2),\gam[x_2,x_3), \ldots \gam[x_{n-1},x_1)$.
	By the pigeonhole principle, two of the points lie in the same interval, 
	and so we use these two $\beta$ arcs to
	find a simple closed curve containing $x_1, \ldots, x_{n}$.
\end{proof}

This proof cannot be used directly in the quasi-arc case: the space $Y$ has local cut points.
Moreover, to apply the straightening techniques of Theorem~\ref{thm-qarc-joining}
we need a quasi-arc of controlled size through each $x_i$.
In adapting this proof, the following corollary of the $n$-Bogensatz, due to Zippin, will be useful.
\begin{theorem}[{\cite[Corollary 9]{Zip-33-indep-arcs}}]\label{thm-zippin}
	Let $X$ be a connected, locally connected, locally compact, separable metric space.
	If $A, B \subset X$ are compact subsets of size at least $n$, and there is no subset $S \subset X$
	of size at most $n-1$ so that $A\setminus S$ and $B \setminus S$ lie in different components of $X \setminus S$,
	then there exists $n$ disjoint arcs joining $A$ and $B$.
\end{theorem}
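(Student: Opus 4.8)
The plan is to reduce this set-to-set statement to the point-to-point $n$-Bogensatz (Theorem~\ref{thm-bogensatz}) by coning $A$ and $B$ off to two new ``apex'' points $a^*,b^*$, and then to transfer the separation hypothesis across the construction. The first step is to reduce to the case that $A$ and $B$ are \emph{finite}: since arcs joining finite subsets $A_0\subseteq A$, $B_0\subseteq B$ are in particular arcs joining $A$ and $B$, it suffices to produce finite $A_0,B_0$ of size at least $n$ for which the separation hypothesis still holds. If no such subsets existed, every pair of finite subsets (of size $\geq n$ and containing a fixed $n$-element set) would be separable by some set of at most $n-1$ points; working in the compact metric one-point compactification $X\cup\{\infty\}$ and taking a convergent subnet of these cut sets along the directed family of finite subsets, one extracts a limit set $S\subset X$ with $|S|\leq n-1$. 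An arcwise-connectedness argument (any arc in $X\setminus S$ joining surviving points of $A$ and $B$ is compact, hence avoided by all nearby cut sets, contradicting separation) shows $S$ itself separates $A\setminus S$ from $B\setminus S$, contradicting the hypothesis. This finiteness reduction is what keeps the coning construction tame.

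So assume $A,B$ are finite. Form $Y$ by attaching to $X$ a finite \emph{star} $CA$ joining a new point $a^*$ to each point of $A$ by an arc, and a star $CB$ joining $b^*$ to each point of $B$, where the attaching arcs meet $X$ only at their endpoints in $A$ (resp. $B$) and are otherwise disjoint from $X$ and from one another except at $a^*$ (resp. $b^*$). Then $Y$ is a connected, locally connected, locally compact, separable metric space: local compactness and local connectivity away from the apices come from $X$ and from the arcs, and at $a^*,b^*$ the sub-stars form a neighbourhood basis of connected open sets. Here finiteness is essential, since a cone over an infinite, non-locally-connected compactum need not be locally connected.

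The heart of the argument is to show that $a^*$ and $b^*$ cannot be separated in $Y$ by fewer than $n$ points, after which the pairwise form of the $n$-Bogensatz (Theorem~\ref{thm-bogensatz}; see \cite{Nob-32-arcs-between-points,Zip-33-indep-arcs,Why-48-bogensatz}) yields $n$ arcs from $a^*$ to $b^*$, pairwise disjoint apart from their endpoints. Suppose $S'\subseteq Y$ with $|S'|\leq n-1$ separates $a^*$ from $b^*$; necessarily $a^*,b^*\notin S'$. Define $S\subseteq X$ by keeping each point of $S'\cap X$ and replacing each point of $S'$ in the interior of a star by the unique base point of the arc containing it. Then $|S|\leq|S'|\leq n-1$ and $S'\cap X\subseteq S$, so $X\setminus S\subseteq Y\setminus S'$. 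Because the attaching arcs are disjoint except at the apex, a point $p\in A$ is cut off from $a^*$ in $CA\setminus S'$ only when its arc meets $S'$, and then $p\in S$; hence $a^*$ is joined in $Y\setminus S'$ to every point of $A\setminus S$, and symmetrically $b^*$ to every point of $B\setminus S$. As local connectivity makes the components of $X\setminus S$ open, if $S$ failed to separate $A\setminus S$ from $B\setminus S$ then some component of $X\setminus S$ would contain points $p\in A\setminus S$ and $q\in B\setminus S$; concatenating the star arc $a^*\to p$, a path from $p$ to $q$ in that component, and the star arc $q\to b^*$ would join $a^*$ to $b^*$ in $Y\setminus S'$, a contradiction. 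Thus $S$ separates $A$ from $B$, contradicting the hypothesis.

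Finally I would extract the desired arcs. Each of the $n$ disjoint arcs $\alpha_i$ from $a^*$ to $b^*$ leaves $a^*$ along a distinct star arc (by disjointness), so the points $p_i\in A$ at which it last meets $A$, and the points $q_i\in B$ at which it first meets $B$ thereafter, are $n$ distinct points of $A$ and $B$ respectively. After $p_i$ the arc never returns to $A$, hence cannot re-enter the interior of $CA$, and before $q_i$ it has not entered $CB$, so $\alpha_i[p_i,q_i]\subseteq X$ is an arc with one endpoint in $A$ and one in $B$; these remain pairwise disjoint since the $\alpha_i$ are. This gives $n$ disjoint arcs joining $A$ and $B$. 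The main obstacle is the separation transfer of the third paragraph, and specifically the tension it resolves: the attaching arcs must be disjoint so that each deleted point ``costs'' only one point of $A$ or $B$, and it is precisely this fan structure that forces the finiteness reduction in order to keep $Y$ locally connected.
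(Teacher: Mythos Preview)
The paper does not supply a proof of this statement at all: Theorem~\ref{thm-zippin} is quoted from \cite[Corollary 9]{Zip-33-indep-arcs} and used as a black box in the proof of Theorem~\ref{thm-qcircle-n-points}. So there is nothing in the paper to compare your argument against.

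That said, your proposal is a sound outline of how one actually proves Zippin's corollary, and is close to how it is done in the classical literature: reduce the set-to-set statement to the two-point $n$-Beinsatz by attaching fans at $A$ and $B$. Two remarks. First, you correctly note that you need the \emph{pairwise} (local) form of the $n$-Bogensatz---that $a^*,b^*$ not separable by $n-1$ points implies $n$ internally disjoint $a^*b^*$-arcs---rather than Theorem~\ref{thm-bogensatz} as stated, since your space $Y$ may well be globally disconnected by fewer than $n-1$ points; this stronger local form is indeed what \cite{Zip-33-indep-arcs,Why-48-bogensatz} establish, so the citation is appropriate. Second, your finiteness reduction is the most delicate step and is only sketched: the subnet argument does work, but one must check that the convergent subnet of cut sets is cofinal in the inclusion order on finite pairs $(A_0,B_0)$, so that any fixed arc $\gamma\subset X\setminus S$ joining $a\in A$ to $b\in B$ is eventually missed by a cut set that was supposed to separate some $A_0'\ni a$ from $B_0'\ni b$. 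You have the right idea, and the fan (rather than cone) construction is exactly what forces the finiteness reduction, as you observe: the infinite star fails local compactness at the apex, and the genuine cone $A\times[0,1]/(A\times\{1\})$ fails local connectivity off the apex unless $A$ is locally connected.
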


\begin{proof}[Proof of Theorem~\ref{thm-qcircle-n-points}]
	The $n=1$ and $n=2$ cases follow from Theorem \ref{thm-qcircle-bogensatz}.
	We prove the $n>2$ case by strong induction.
	
	By induction, there exists $\lambda_1 = \lambda_1(L,N,n-1)$ so that any
	set $T$ of at most $n-1$ points in an $N$-doubling, $L$-annularly linearly connected, 
	complete metric space $X$ must lie in a $\lambda_1$-quasi-circle $\gamma$ with $\diam(\gamma) \leq \lambda_1 \diam(T)$.
	
	Suppose $x_1, \ldots, x_{n}$ are given.
	Without loss of generality, we may assume that
	$d(x_1, x_2) \leq d(x_i, x_j)$ for all $i \neq j$, and that
	$d(x_1, x_i) \leq d(x_1, x_{i+1})$ for $i=2, \ldots, n-1$.
	We rescale so that $d(x_1, x_n) = 1$.
	
	Let $s = d(x_1, x_2)$, $S = \diam(\{x_1, \ldots, x_n\}) \in [1, 2]$, and
	set $\del = 1/{200L^2 \lambda_1^3}$.
	
	The proof splits into two cases.
	
	\vspace{2mm}
	{\noindent\emph{Case 1:}} Suppose $s \geq \del^{n-1}$.
	
	By induction, there exists a $\lambda_1$-quasi-circle $\alp_1$ through $x_2, \ldots, x_{n}$
	of diameter at most $2\lambda_1$, and at least $s$.  We relabel $x_2, \ldots, x_{n}$ so that
	they lie in $\alp_1$ in this cyclic order.
	
	Now suppose $d(x_1, \alp_1) \leq s/10L\lam_1$.
	Then one can alter $\alp_1$ using a detour in
	$A(x_1, s/10L^2\lam_1, s/5\lam_1)$ to find a simple closed curve $\alp_2$ which
	does not meet $B(x_1, s/10L^2\lam_1)$.
	Since this only cuts out loops of $\alp_1$ in $B(x_1, s/5)$, $\alp_2$ agrees with $\alp_1$ 
	outside $B(x_1, s/5)$, and is a $\lam_1$-quasi-arc there.
	Therefore we can apply Theorem~\ref{thm-qarc-joining} with $\eps = s/100L^2\lam_1$ to straighten
	$\alp_2$ into a $\lambda_2$-quasi-circle $\beta_1$, which passes through $x_2, \ldots, x_{n}$,
	and does not meet $B(x_1, s/20L^2\lam_1)$, for $\lambda_2 = \lambda_2(L,N,\lam_1,s/S) \geq \lambda_1$.
	
	If $d(x_1, \alp_1) \geq s/10L\lam_1$, then we set $\beta_1 = \alp_1$ and continue.
	
	By the $n=2$ case of the theorem, we find a $\lambda_1$-quasi-circle $\beta_2$ through
	$x_1$ of diameter at least $s/50L^2\lam_1^2$, inside $B(x_1, s/40L^2\lam_1)$.
	
	As $X$ has no local cut points, no two disjoint compacta can be separated by removing any
	finite number of points.  Therefore, Theorem~\ref{thm-zippin} implies that we can join
	$\beta_1$ to $\beta_2$ by $2n$ disjoint arcs inside $B(x_1, 4 L S)$.
	We can control the separation of these arcs.
	
	\begin{lemma}[{Cf.\ \cite[Lemma 3.3]{Mac-10-confdim}}]\label{lem-qarc-join-unif-sep1}
	We can join $\beta_1$ to $\beta_2$ by $2n$ arcs in $B(x_1, 4 \lam_1 L S)$
	that are $\delta_* S$-separated, for  $\delta_* = \delta_*(L,N, \lam_1, \lam_2, s/S) >0$.
	\end{lemma}
	\begin{proof}
	This follows from a compactness argument: if not, there is a sequence of configurations giving
	counterexamples.  To be precise, we can find (on rescaling to $S=1$), a sequence
	\[
		\{ \cC^i = (X^{(i)}, x_1^{(i)}, \beta_1^{(i)}, \beta_2^{(i)}) \}_{i\in\N}
	\]
	so that
	for each $i \in \N$, $X^{(i)}$ is an $L$-annularly linearly connected, $N$-doubling, complete metric space with base point 
	$x_1^{(i)}$, and $\beta_1^{(i)}$ and $\beta_2^{(i)}$ are $\lam_2$-quasi-circles in $B(x_1^{(i)}, 2 \lam_1 S)$,
	with uniformly controlled diameter and separation.
	Moreover, there do not exist $2n$ disjoint arcs connecting $\beta_1^{(i)}$ to $\beta_2^{(i)}$ which 
	are $1/i$ separated.
	
	Such configurations have a subsequence that converges to a limit configuration 
	$(X^\infty, x_1^\infty, \beta_1^\infty, \beta_2^\infty)$ in the Gromov-Hausdorff topology.
	We apply Theorem~\ref{thm-zippin} to the limit space $X^{\infty}$ to find $2n$ disjoint arcs joining
	$\beta_1^\infty$ to $\beta_2^\infty$ inside $B(x_1^\infty, 3 \lam_1 L S)$.
	As these arcs are disjoint, they are separated by some definite distance.
	These arcs will then lift back to $\cC^i$ for sufficiently large $i$ to give a contradiction.
	\end{proof}
	
	Now of these $2n$ arcs, at most $n$ of them can be $\frac{1}{2} \delta_* S$ close to any of the
	$n$ different points $x_1, \ldots, x_n$.
	Therefore, we can find $n$ arcs $\gam_1', \ldots, \gam_n'$ which join $\beta_1$ to $\beta_2$,
	are $\delta_* S$-separated, and have distance at least $\frac12 \delta_* S$ from any $x_i$.
	
	By the pigeonhole principle, two of the arcs in $\{\gam_j'\}$ must have endpoints that lie in the same
	arc out of $\beta_1(x_2,x_3),  \ldots$, $\beta_1(x_{n-1},x_{n})$ and $\beta_1(x_{n},x_2)$.
	Let us call these arcs $\gam_1 = \gam_1[y_1, z_1]$ and $\gam_2 = \gam_2[y_2, z_2]$,
	where $y_1,y_2 \in \beta_1$, and $z_1,z_2 \in \beta_2$.
	
	Let $\gamma_3$ be the simple closed curve formed from $\beta_1[y_1, y_2]$ (containing $x_2, \ldots, x_{n}$),
	$\gam_1, \gam_2$, and $\beta_2[z_1, z_2]$ (containing $x_1$).
	As $\beta_1, \beta_2$ are quasi-arcs, and we have control on the distance of $\gam_1, \gam_2$ from
	$x_1, \ldots, x_n$, we can apply Theorem~\ref{thm-qarc-joining} to straighten $\gam_2$ into
	a $\lambda$-quasi-circle $\gam$, where $\lam=\lam(L,N,\lam_1,\lam_2,\del_*,s/S) = \lam(L,N,n)$.
	Moreover, $\diam(\gam) \leq 4\lam_1 S$ as desired.
	
	\vspace{2mm}
	{\noindent\emph{Case 2:}} Suppose $s < \del^{n-1}$.
	
	This case is similar to Case 1, except now $s$ may be arbitrarily small,
	so we replace $\beta_2$ by a quasi-circle through $x_1$ and all points close to it.
	
	Consider the set $U = \{d(x_1, x_i)\}_{i=3}^{n-1}$ of size $n-3$.
	One of the intersections 
	$U \cap [\del^{n-1}, \del^{n-2}), \ldots, U \cap [\del^2, \del^1)$ is empty.
	Thus there exists 
	$m \in \{2, \ldots, n-1 \}$ so that 
	$d(x_1, x_m) \leq \del d(x_1, x_{m+1})$.
		
	Let $\alp_1$ be a $\lam_1$-quasi-circle through $\{x_1, x_{m+1}, x_{m+2}, \ldots, x_n\}$.
	Similarly to case 1, use the $L$-annularly linearly connected property for
	$A(x_1, 4\lam_1 L d(x_1, x_m), 8\lam_1 L d(x_1, x_m))$ to find a circle $\alp_2$ that detours
	$\alp_1$ around $B(x_1, 4\lam_1 d(x_1, x_m))$, while only cutting out loops
	in \[ B(x_1, 8 \lam_1^2 L^2 d(x_1, x_m)) \subset B(x_1, \tfrac45 d(x_1, x_{m+1})). \]
	In particular, $\alp_2$ contains $\{x_{m+1}, \ldots, x_n\}$, and we relabel so they are in this cyclic order.
	
	We use Theorem~\ref{thm-qarc-joining} with $\eps = \lam_1 d(x_1, x_m)$ to straighten $\alp_2$ into a 
	quasi-circle $\beta_1$ which remains outside $B(x_1, 3\lam_1 d(x_1, x_m))$.
	Moreover, $\beta_1$ will $9 \lam_1 L d(x_1, x_m)$-follow $\alpha_1$.
	Inside $B(x_1, 9 \lam_1^2 L^2 d(x_1, x_m))$, $\beta_1$ is a $\lam_3$-quasi-arc,
	where $\lam_3 = \lam_3(L,N,\lam_1)$ is independent of $d(x_1,x_m)$.
	
	\begin{figure}
		\begin{center}
		\includegraphics[width=.4\textwidth]{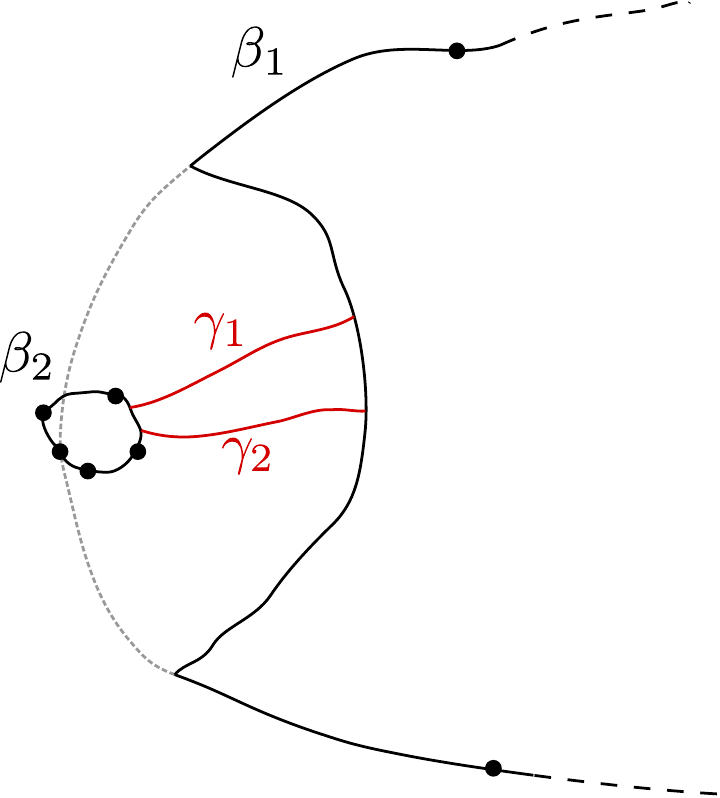}
		\end{center}
 		\caption{Joining two quasi-circles in case 2}
		\label{fig-joincase2}
	\end{figure}
	
	Let $\beta_2$ be a $\lam_1$-quasi-circle through $\{x_1, \ldots, x_m\}$, relabelled so they are in this cyclic order,
	of diameter at most $2\lam_1 d(x_1,x_m)$
	(see Figure~\ref{fig-joincase2}).
	
	As in Case 1, by Theorem~\ref{thm-zippin} we can join 
	$\beta_1$ to $\beta_2$ by $2n$ disjoint arcs inside $B(x_1, 10\lam_1^2 L^2 d(x_1, x_m))$.
	Inside this ball we have control on the diameter of $\beta_2$, and the quasi-arc constants of $\beta_1, \beta_2$.
	Therefore, a similar argument to Lemma~\ref{lem-qarc-join-unif-sep1} gives 
	that these arcs are $\del_* d(x_1, x_m)$ separated, where $\del_* = \del_*(L,N,\lam_1, \lam_3)$.
	
	As before, $n$ of these arcs, let us call them $\gam_1', \ldots, \gam_n'$, will join $\beta_1$ to $\beta_2$,
	be $\delta_* d(x_1, x_m)$-separated, and have distance at least $\frac12 \delta_* d(x_1, x_m)$ from any $x_i$.
	
	By the pigeonhole principle, two of the arcs in $\{\gam_j'\}$ must have endpoints that lie in the same
	arc out of $\beta_2(x_1,x_2), \ldots, \beta_2(x_{m-1},x_{m})$ and $\beta_2(x_{m},x_1)$.
	Let us call these arcs $\gam_1 = \gam_1[y_1, z_1]$ and $\gam_2 = \gam_2[y_2, z_2]$,
	where $y_1,y_2 \in \beta_1$, and $z_1,z_2 \in \beta_2$.  (Again, see Figure~\ref{fig-joincase2}.)
	
	Using the fact that $\beta_1$ follows $\gam_1$, we see that the diameter of the smaller
	arc $\beta_1[y_1, y_2]$ is at most $100\lam_1^3 L^2 d(x_1, x_m) < \frac{1}{2} d(x_1, x_{m+1})$.
	Therefore, there is a subarc $\beta_1'[y_1, y_2] \subset \beta_1$ containing $x_{m+1}, \ldots, x_{n}$.
	
	Let $\gamma_3$ be the simple closed curve formed from $\beta_1'$,
	$\gam_1, \gam_2$, and $\beta_2[z_1, z_2]$ (containing $x_1, \ldots, x_m$).
	As $\beta_1, \beta_2$ are quasi-arcs, and we have control on the distance of $\gam_1, \gam_2$ from
	$x_1, \ldots, x_n$, we can apply Theorem~\ref{thm-qarc-joining} with $\eps = \frac{1}{2} \del_* d(x_1, x_m)$
	to straighten $\gam_2$ into a quasi-circle $\gam$.
	
	Let us show that $\gam$ is a quasi-circle with controlled constant.
	Observe that $\gam$ $D$-follows $\alp_1$, where $D = 10 \lam_1^2 L^2 d(x_1, x_m)$.
	Let $f : \gam \ra \alp_1$ be the associated map. Consider the following three cases.
	
	(i) From Theorem~\ref{thm-qarc-joining}, there exists $\lam_4 = \lam_4(L,N, \lam_1, \del_*)$
	so that if $z,z' \in \gam \cap B(x_1, 10 \lam_1 D)$, then $\diam(\gam[z,z']) \leq \lam_4 d(z,z')$.
	
	(ii) If $\gam[z,z'] \cap B(x_1, 2 \lam_1 D) = \emptyset$, then $\gam[z,z'] = \alp_1[z,z']$, 
	so we have $\diam(\gam[z,z']) \leq \lam_1 d(z,z')$.
	
	(iii) Otherwise, we know that $\diam(\gam[z,z']) \geq 8 \lam_1 D$, so
	\begin{align*}
		\diam(\gam[z,z']) & \leq 2D + \diam(\alp_1[f(z),f(z')])
			\leq 2D + \lam_1 d(f(z),f(z')) \\ &
			\leq 2D + 2\lam_1 D + \lam_1 d(z,z'),
	\end{align*}
	so $\frac{1}{2} \diam(\gam[z,z']) \leq \lam_1 d(z,z')$, thus
	$\diam(\gam[z,z']) \leq 2 \lam_1 d(z,z')$.
		
	Therefore, $\gam$ is a $\lam$-quasi-circle, where $\lam = \max\{\lam_4, 2\lam_1\}$ depends only on $L,N,n$.
	Observe that $\diam(\gam) \leq 4\lam_1 S$, as desired.	
\end{proof}

\bibliographystyle{alpha}
\bibliography{biblio}

\end{document}